\documentclass[12pt,reqno]{article}

\usepackage[usenames]{color}
\usepackage{amssymb}
\usepackage{amsmath}
\usepackage{amsthm}
\usepackage{tikz}
\usetikzlibrary{automata,positioning,arrows}

\usepackage[colorlinks=true,
linkcolor=webgreen,
filecolor=webbrown,
citecolor=webgreen]{hyperref}

\definecolor{webgreen}{rgb}{0,.5,0}
\definecolor{webbrown}{rgb}{.6,0,0}

\usepackage{fullpage}

\newcommand{\seqnum}[1]{\href{https://oeis.org/#1}{\rm \underline{#1}}}

\theoremstyle{plain}
\newtheorem{theorem}{Theorem}
\newtheorem{corollary}[theorem]{Corollary}
\newtheorem{lemma}[theorem]{Lemma}
\newtheorem{proposition}[theorem]{Proposition}

\theoremstyle{definition}
\newtheorem{definition}[theorem]{Definition}
\newtheorem{example}[theorem]{Example}

\theoremstyle{remark}
\newtheorem{remark}[theorem]{Remark}

\begin{document}

\begin{center}
\vskip 1cm{\LARGE\bf Meta-Automatic Sequences}
\vskip 1 cm
\large

John M.\ Campbell\\
Department of Mathematics and Statistics\\
 Dalhousie University\\ 
 Halifax, NS B3H 4R2\\ 
 Canada \\
\href{mailto: jh241966@dal.ca}{\tt jh241966@dal.ca}\\
\vskip .2 in
 Beno\^{\i}t Cloitre \\ 
 19, rue Louise Michel\\
 92300 Levallois-Perret\\
 France\\
\href{mailto:benoit7848c@yahoo.fr}{\texttt{benoit7848c@yahoo.fr}} \\ 

\end{center}

\vskip .2 in

\begin{abstract}
 \emph{Nested} (or \emph{meta-Fibonacci}) recurrences, such as the recurrence used to define Hofstadter's $Q$-sequence, along with the 
 \emph{digit-based} recurrences that underlie automatic sequences are of interest from both number-theoretic and combinatorial points 
 of view. In this direction, Allouche and Shallit showed how the frequency sequence of a variant of the $Q$-sequence is $2$-automatic. 
 This inspires us to introduce what may be seen as a natural combination of the recurrences for meta-Fibonacci and automatic sequences, 
 by introducing the concept of a \emph{meta-automatic sequence}. We exhibit two binary meta-automatic sequences $\mathcal{M}_1$ 
 and $\mathcal{M}_{2}$ whose defining recurrences do not satisfy the Allouche--Shallit automaticity criterion directly, and this is 
 formalized in our paper. For each of these integer sequences $\mathcal{M}_{1}$ and $\mathcal{M}_{2}$, we prove explicit DFAO 
 evaluations, together with 4-uniform morphisms, and we also consider the factor complexities of these sequences. 
 \end{abstract}
 
\section{Introduction}
 A \emph{nested recurrence} is a recurrence in which a term $a(n)$ is defined using values of $a$ at indices that themselves depend on 
 earlier values of the sequence. The classical example is Hofstadter's $Q$-sequence \cite[p.\ 137]{Hofstadter1979}, indexed in the 
 On-Line Encyclopedia of Integer Sequences \cite{oeis} as \seqnum{A005185}, defined by $Q(1)=Q(2)=1$ and
\begin{equation}\label{displayQrec} 
 Q(n) = Q(n-Q(n-1)) + Q(n-Q(n-2)) 
\end{equation}
 for $n>2$. Recurrences as in \eqref{displayQrec} are also known as \emph{meta-Fibonacci recurrences}. Other examples of integer 
 sequences satisfying nested recurrences include the sequence \seqnum{A244477} studied by Golomb \cite{Golomb1990}. Many 
 problems concerning Hofstadter's $Q$-sequence and its generalizations remain open to this day. Notably, it is not known whether or 
 not $Q(n)$ is even \emph{defined} for all $n>0$.

 In contrast to nested recurrences, \emph{digit-based recurrences} (also called \emph{divide-and-conquer recurrences}) define 
 $a(k^{t}n+j)$ for $j\in\{0,1,\ldots,k^{t}-1\}$
 as a function of previously computed values $a(k^{t'}n'+j')$ with $t'<t$ and
 $j'\in\{0,1,\ldots,k^{t'}-1\}$. Digit-based recurrences can be used to define \emph{automatic
 sequences}~\cite{AlloucheShallit2003,Cobham1972}, 
 which appear in many areas of number
 theory, computer science, and combinatorics.

 Allouche and Shallit \cite{AlloucheShallit2012} showed that the frequency sequence of a variant of Hofstadter's $Q$-sequence satisfies a 
 digit-based recurrence, and, moreover, is $2$-automatic. They also proved an automaticity criterion 
 \cite[Theorem~2.2]{AlloucheShallit2012} reviewed in Section~\ref{subsectionpre} below. This inspires us to introduce the concept of a 
 \emph{meta-automatic sequence}, which may, informally, be thought of as ``combining'' the concepts of meta-Fibonacci and 
 automatic sequences. This is formalized below. For example, in contrast to a rule such as $a(4n+2) = a(2n)$ used to define an 
 automatic sequence, we consider rules such as
 \begin{equation}\label{displayintroex} 
 a(4 n + 2) = a(2n + 1 - a(n)),
 \end{equation}
 in which the argument of $a$ on the right-hand side depends on the value~$a(n)$. A recurrence of the form suggested in 
 \eqref{displayintroex} would not, in general, satisfy the Allouche--Shallit automaticity criterion, since the argument on the right is not 
 of the required polynomial form. 

 The idea of combining automatic and meta-Fibonacci recurrences is motivated by past work on meta-Fibonacci sequences, in addition to 
 the above referenced work by Allouche and Shallit \cite{AlloucheShallit2012}. We point to references on the
 {H}ofstadter {$Q$}-sequence~\cite{Fox2020,Fox2016Quasipolynomial,Hendel2015,Pinn1999},
 on variants and generalizations of the
 {H}ofstadter {$Q$}-sequence~\cite{Alkan2018,AlkanFoxAybar2017,BalamohanKuznetsovTanny2007,Dekking2023,Fox2024,Tanny1992}, 
 and on meta-Fibonacci sequences~\cite{CaiTanny2008,CallaghanChewIIITanny2005,DaltonRahmanTanny2011,
DeugauRuskey2006,Emerson2006,Fox2016Linear,IsgurRahman2011,
IsgurReissTanny2009,JacksonRuskey2006,RuskeyDeugau2009,SobolewskiUlas2023}.

\subsection{Outline}
 A formal definition of a meta-automatic sequence is given in Section~\ref{sectionmetamain}. A meta-automatic recurrence may, after 
 simplification, reduce to a digit-based recurrence, in which case the Allouche--Shallit criterion applies directly. 
 We formalize, in this paper, how meta-automatic sequences are not necessarily reducible in this way. 

 A binary sequence $a:\mathbb{N}_0\to\{0,1\}$ is said to be \emph{balanced} if
\begin{equation}\label{eq:balance} 
 a(2n)+a(2n+1)=1\qquad (n\ge 0).
\end{equation}
 The binary sequences under consideration in this paper are, for the most part, balanced. In a related way, much of this paper makes 
 use of the additive operation underlying the field
 $\mathbb{F}_2 = \{0,1\}$, letting $\oplus$ denote this
 operation, which may be referred to as \emph{XOR}, with $0\oplus 0 = 
 1\oplus 1 = 0$ and $0\oplus 1 = 1\oplus 0 = 1$. For balanced binary sequences, Lemma~\ref{lem:balance-xor} gives the identity 
 $$ a(2n+1-a(n)) = a(2n+1)\oplus a(n). $$ We prove that balanced, binary, meta-automatic sequences are, for the cases 
 covered in Lemma~\ref{lem:dyadic}, ultimately periodic. To obtain non-periodic examples, we therefore work with base-$4$
 recurrences. We consider:

\begin{itemize}

 \item The integer sequence $\mathcal{M}_{1}$ (\seqnum{A392736}), defined via a base-$4$ recurrence in which one of the rules in 
 the recurrence is nested in the 
 sense outlined above. After applying Lemma~\ref{lem:balance-xor} below,
 the pair $(\mathcal{M}_{1}(n), b(n))$, for $b(n)$ defined in~\eqref{eq:b1} below, yields a $4$-state DFAO
 (Section~\ref{sec:meta1}).

\item The integer sequence $\mathcal{M}_{2}$ (\seqnum{A391614}), defined via a base-$4$ recurrence in which \emph{both} of the rules 
 in the recurrence are nested in the sense outlined above. After applying Lemma~\ref{lem:balance-xor}, the pair
 $\bigl(\mathcal{M}_{2}(n),\mathcal{M}_{2}(2n+\mathcal{M}_{2}(n))\bigr)$ obeys an affine
 rule over $\mathbb{F}_2^2$ with linear part
 $\bigl(\begin{smallmatrix}0 & 1\\1&1\end{smallmatrix}\bigr)$. The sequence satisfies
 $\mathcal{M}_{2}(n) = \mathbf{t}(q(n))$, where $q$ is an explicit bit-masking operator
 and $\mathbf{t}$ is the Thue--Morse sequence (Theorem~\ref{thm:M2-tq}).

\end{itemize}

 Although $\mathcal{M}_{1}$ and $\mathcal{M}_{2}$ both have $4$-state DFAOs, we have that $\mathcal{M}_{2}$ admits the evaluation 
 $\mathcal{M}_{2}(n) = \mathbf{t}(q(n))$, whereas $\mathcal{M}_{1}$ is described by means of the pair $(\mathcal{M}_{1}(n), b(n))$. 
 Their factor-complexity functions, studied in Section~\ref{sec:complexity}, also differ. For each of the sequences $\mathcal{M}_{1}$ and 
 $\mathcal{M}_{2}$, we give a base-$4$ DFAO (MSB-first) and an explicit $4$-uniform morphism, 
 and we also investigate the factor complexity functions for $\mathcal{M}_{1}$ and $\mathcal{M}_{2}$
 in Section \ref{sec:complexity}. 

 \subsection{Preliminaries}\label{subsectionpre} 
 For background on automatic sequences, we refer to the texts of Allouche and Shallit \cite{AlloucheShallit2003}
 and of Shallit \cite{Shallit2022}. We proceed with the following 
 definition, which may be used, as below, to define automatic sequences. 

\begin{definition} 
 Let $U = (U(n))_{n \geq 0}$ denote an infinite sequence. The \emph{$k$-kernel} of $(U(n))_{n \geq 0}$ \cite[p.\ 185]{AlloucheShallit2003} is 
 $$ K_{k}(U) = \left\{ \left( U_{k^{i}n+j} \right)_{n \geq 0} : \text{$i \geq 0$ and $0 \leq j < k^{i}$} \right\}. $$ 
\end{definition} 

 For $k \geq 2$, a \emph{$k$-automatic sequence} is a sequence $U = (U(n))_{n \geq 0}$ such that $K_{k}(U)$ is finite. Equivalently,
 the sequence $U$ is 
 $k$-automatic if there exists a DFAO such that, upon inputting the base-$k$ representation of $n$, this DFAO outputs $U(n)$. See 
 Allouche and Shallit's text \cite{AlloucheShallit2003} for details. A prototypical instance of an automatic sequence is the 
 \emph{Thue--Morse sequence} $\mathbf{t}:\mathbb{N}_0\to\{0,1\}$ defined so that $\mathbf{t}(0)=0$ and $$ \mathbf{t}(2n) = 
 \mathbf{t}(n),\qquad \mathbf{t}(2 n + 1) = 1 - \mathbf{t}(n)\qquad (n\ge 0). $$ 
 Equivalently, $\mathbf{t}(n)=s_2(n)\bmod 2$, where $s_2(n)$ is the sum of the binary digits of $n$, 
 noting that the balanced condition in \eqref{eq:balance} holds. 
 We introduce, in this paper, a variant of 
 an automaticity criterion due to Allouche and Shallit~\cite{AlloucheShallit2012}, which we now recall.

 Let $U = (U(n))_{n \geq 0}$ denote a sequence, and let $\alpha$ denote an integer. Adopting notation from Allouche and Shallit's work 
 \cite{AlloucheShallit2012}, for $n \geq -\alpha$, we write $U^{\alpha}$ in place of the sequence given by 
 $$ U^{\alpha} := U(n + \alpha). $$ Similarly, letting $q$ and $i$ and $j$ be positive integers, 
 we write $U_{q, i, j}$ in place of the sequence defined by 
\begin{equation}\label{Utriple} 
 U_{q, i, j}(n) = U\left( q^{i} n + j \right) 
\end{equation}
 for $n \geq 0$. Moreover, for integers $q \geq 2$ and $t \geq 1$, we write $s(q, t) = \frac{1-q^{t+1}}{1-q}$ in place of the number of 
 pairs $(i, j)$ of integers $i$ and $j$ such that $0 \leq i \leq t$ and $0 \leq j \leq q^{i} - 1$. Fix an ordering of these pairs $(i,j)$, with $(0, 
 0)$ appearing first, and write the corresponding sequences as
 $U_1=U,\,U_2,\ldots,\,U_{s(q,t)}$. The choice of ordering plays no role in what follows. 
 We also let $m(q, t) = q^{t+1} - 1$.

\begin{theorem}\label{AScrit} 
 (Allouche and Shallit, 2012) Let the terms of $(U(n))_{n \geq 0}$ be in a finite set $\mathcal{A}$, and let $q\geq 2$ denote an integer. 
 Then $U$ is $q$-automatic if there are nonnegative integers $t$, $a$, $b$, and $n_0$ together with a family $\{ f_{j} : j = 0, 1, \ldots, 
 m(q, t) \}$ of functions from $\mathcal{A}^{a + b + s(q, t)}$ to $\mathcal{A}$ such that 
\begin{multline*}
 U\big( q^{t+1} n + j \big) = 
 f_{j}\big( U^{-a}(n), \ldots, U^{-1}(n), U^{0}(n), U^{1}(n), \ldots, \\ U^{b}(n), U_{2}(n), 
 U_{3}(n), \ldots, U_{s(q, t)}(n) \big)
\end{multline*}
 for all $n \geq n_0$ and for all $j \in \{ 0, 1, \ldots, m(q, t) \}$ \cite{AlloucheShallit2012}. 
\end{theorem}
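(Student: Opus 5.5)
We prove that the $q$-kernel of $U$ is finite by trapping it inside a finite set of sequences determined by finitely many initial values and a finite ``window'' of data. The idea is to pass from the scalar sequence $U$ to a vector-valued sequence that records, at index $n$, all the quantities appearing as arguments of the $f_j$. Concretely, we define
\[
V(n) = \bigl( U(q^{i}(n+\ell)+j) \bigr)_{0\le i\le t,\ 0\le j<q^{i},\ -a'\le \ell\le b'}
\]
for suitable window widths $a',b'$ depending on $a$, $b$, $q$, $t$. This sequence takes values in a finite set $\mathcal{A}^{N}$. The shifts $U^{-a},\dots,U^{b}$ and the kernel sequences $U_2,\dots,U_{s(q,t)}$ evaluated at $n$ are all coordinates of $V(n)$.

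The key step is to show that for each residue $r\in\{0,\dots,q-1\}$ there is a function $g_r$ with $V(qn+r)=g_r(V(n))$ for all $n\ge n_1$. Every coordinate of $V(qn+r)$ has the form $U(q^{i}(qn+r+\ell)+j)=U(q^{i+1}n+j')$ with $j'=q^{i}(r+\ell)+j$.

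\begin{itemize}
\item If $i+1\le t$ and $0\le j'<q^{i+1}$, this is already a coordinate of $V(n)$.
\item If $i=t$, we write $j'=q^{t+1}m+j''$ with $0\le j''<q^{t+1}$. The term is then $U(q^{t+1}(n+m)+j'')$, which the hypothesis expresses as $f_{j''}$ applied to shifts of $U$ and kernel sequences at $n+m$. These are coordinates of $V(n)$ provided the window $[-a',b']$ absorbs $m$ plus the shifts $-a,\dots,b$.
\item When $j'$ falls outside $[0,q^{i+1})$ for $i<t$, we reduce it in the same way, since such a term is a kernel coordinate at shift $n+m$.
\end{itemize}

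Choosing $a',b'$ so that the window is closed under this bookkeeping is the main obstacle. Because $|m|$ is bounded by roughly $(q^{t}(q+b')+q^t)/q^{t+1}\approx b'/q+O(1)$, the needed widths contract by a factor of $q$. So choosing $b'\ge (b+2)q/(q-1)$, and similarly for $a'$, gives closure. I would write this inequality out carefully, since it is where the argument can fail.

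Given the functions $g_r$, $V$ is generated by a DFAO-like rule for $n\ge n_1$. For any word $w$ of length $k$ and $j$ with value given by $w$, $V(q^{k}n+j)$ is obtained from $V(n)$ by composing $g$'s, as long as $n\ge n_1$. Each kernel sequence $(U(q^k n+j))_n$ is determined by two pieces of data: the composed map $G_w:\mathcal{A}^N\to\mathcal{A}^N$ followed by a coordinate projection, and the finitely many initial values $U(q^k n+j)$ for $n<n_1$. The second piece of data is a problem, because the initial values for $n<n_1$ do not come from the $g$'s.

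To handle this, I would replace $n_1$ by $0$ by including in the state the finite information about whether the index is still small. Alternatively, I would note that $V(q^k n+j)$ for $n<n_1$ and $k$ large equals $G_{w'}(V(n''))$ with $n''<n_1$ arising from the prefix. Both $G_w$ ranging over a finite monoid of self-maps of a finite set and the finitely many initial vectors give finitely many possibilities, so $K_q(U)$ is finite and $U$ is $q$-automatic.
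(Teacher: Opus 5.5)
The paper does not prove this theorem. It is quoted from Allouche and Shallit, so there is no internal proof to compare against. Judged on its own, your argument is correct. It follows the standard finite-kernel route: pass to the finite-valued vector $V(n)$ of shifted kernel terms, show $V(qn+r)=g_r(V(n))$ for $n\ge n_1$, and then bound the $q$-kernel.

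Two points can be tightened.

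First, the window step you flag as delicate is exact. Because $0\le j<q^i$, the level-$i$ coordinate of $V(qn+r)$ indexed by $(j,\ell)$ is $U(q^{i+1}(n+m)+j'')$ with $m=\lfloor (r+\ell)/q\rfloor$ precisely. Hence $\lfloor -a'/q\rfloor\le m\le \lfloor (b'+q-1)/q\rfloor$, and the only conditions you need are $m-a\ge -a'$ and $m+b\le b'$ at level $t$. For example, $a'=(a+1)q$ and $b'=(b+1)q$ work for every $q\ge 2$; your choice $b'\ge (b+2)q/(q-1)$ is also fine. Taking $n_1=n_0+a'$ guarantees two things: $V(n)$ involves only nonnegative indices, and $n+m\ge n_0$ whenever you invoke the hypothesis.

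Second, the initial values are not an obstacle, so neither of your proposed repairs is needed. The second repair is also misstated: the relation $V(qn''+r)=g_r(V(n''))$ is only available for $n''\ge n_1$, not for $n''<n_1$. The kernel element $(U(q^kn+j))_{n\ge 0}$ is determined by two pieces of data:
\begin{itemize}
\item the map $\pi\circ G_w\colon \mathcal{A}^N\to\mathcal{A}$, where $\pi$ is projection onto the coordinate $U(n)$; this governs all $n\ge n_1$;
\item the word $(U(q^kn+j))_{0\le n<n_1}\in\mathcal{A}^{n_1}$.
\end{itemize}
Both range over finite sets, so $|K_q(U)|\le |\mathcal{A}|^{|\mathcal{A}|^N+n_1}$. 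This is just your closing sentence made explicit.
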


\section{Meta-automatic sequences}\label{sectionmetamain} 
 Let $q>0$, $i_0>0$, and $j_0\ge 0$ be integers. For some $v\ge 0$, and for indices $k\in\{1,2,\ldots,v\}$, let $c_k$, $i_k>0$, and $j_k\ge 
 0$ be integers, with $i_k<i_0$ for all $k$. By analogy with~\eqref{Utriple}, and assuming that the codomain of $U$ is contained in 
 $\mathbb{N}_0$, we write $$ U_{q, i_0, j_0, c_1, i_1, j_1, \ldots, c_v, i_v, j_v}(n) = U\Big(q^{i_0} n + 
 j_0+c_1 U(q^{i_1}n+j_1)+\cdots+c_v U(q^{i_v}n+j_v)\Big). $$
 The following definition may be thought of as formalizing 
 the intuition behind recurrences as in \eqref{displayintroex}. 

\begin{definition}\label{metaauto} 
 Suppose that each term of $(U(n))_{n\ge 0}$ is in a finite subset $\mathcal{A}$ of $\mathbb{N}_0$, and let $q\ge 2$ be an 
 integer. The sequence $U$ is \emph{$q$-meta-automatic} if there exist nonnegative integers $t$, $a$, $b$, $n_0$ together with a family
 $\{f_j : j=0,1,\ldots,m(q,t)\}$ of functions to $\mathcal{A}$, with domains as below,
 such that, for all $n\ge n_0$ and all $j\in\{0,1,\ldots,m(q,t)\}$, the value
 $U(q^{t+1}n+j)$ equals $f_j$ evaluated at the tuple
\begin{multline*}
 \Big( U^{-a}(n),\,\ldots,\,U^{-1}(n),\,U^{0}(n),\,U^{1}(n),\,\ldots,\,U^{b}(n),\\
 U_{q,i_0^{(1)},j_0^{(1)},c_1^{(1)},i_1^{(1)},j_1^{(1)},\ldots,c_{v_1}^{(1)},i_{v_1}^{(1)},j_{v_1}^{(1)}}(n),\,\ldots,\\
 U_{q,i_0^{(w)},j_0^{(w)},c_1^{(w)},i_1^{(w)},j_1^{(w)},\ldots,c_{v_w}^{(w)},i_{v_w}^{(w)},j_{v_w}^{(w)}}(n)\Big)
\end{multline*}
 for some $w\ge 0$ and integers $i_{k_1}^{(k_2)}\le t$, $j_{k_1}^{(k_2)}$, and
 $c_{k_1}^{(k_2)}$, where the domain of $f_j$ is $\mathcal{A}^{a+b+w+1}$.
\end{definition}

\begin{example}
 Define $(\mathcal{T}(n))_{n\ge 0}$ by $\mathcal{T}(0)=0$ and
\begin{align*}
 \mathcal{T}(2n) & = \mathcal{T}(n - \mathcal{T}(n-1)) \ \text{for $n \geq 1$, and} \\
 \mathcal{T}(2n+1) & = 1 - \mathcal{T}(n) \ \text{for $n \geq 0$}.
\end{align*}
 Then $(\mathcal{T}(n))_{n\ge 0}$ is $2$-meta-automatic in the sense of
 Definition~\ref{metaauto}. For $n\ge 1$, this sequence agrees with the OEIS sequence
 \seqnum{A039982}, which was defined without nested recurrences.
\end{example}

\begin{lemma}\label{lem:balance-xor} 
 If $a$ is balanced, then $$ a(2n+a(n)) = a(2n)\oplus a(n) \quad\text{and}\quad a(2n+1-a(n)) = a(2n+1)\oplus a(n), $$ for all $n\ge 0$. 
\end{lemma}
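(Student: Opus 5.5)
The plan is a direct case analysis on the value $a(n)\in\{0,1\}$, using only that $a$ is binary and balanced in the sense of~\eqref{eq:balance}. Both identities will be verified by checking the two possibilities for $a(n)$ separately. In each case, the left-hand side becomes either $a(2n)$ or $a(2n+1)$. Balancedness then converts this into the XOR expression on the right.

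For the first identity, suppose $a(n)=0$. Then $a(2n+a(n))=a(2n)=a(2n)\oplus 0$, as required. Now suppose $a(n)=1$. Then $a(2n+a(n))=a(2n+1)$. By~\eqref{eq:balance} we have $a(2n+1)=1-a(2n)$. For a bit $x\in\{0,1\}$, $1-x=x\oplus 1$, so $a(2n+1)=a(2n)\oplus 1=a(2n)\oplus a(n)$.

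The second identity is handled symmetrically. If $a(n)=0$, then $a(2n+1-a(n))=a(2n+1)=a(2n+1)\oplus 0$. If $a(n)=1$, then $a(2n+1-a(n))=a(2n)$. By~\eqref{eq:balance}, $a(2n)=1-a(2n+1)=a(2n+1)\oplus 1=a(2n+1)\oplus a(n)$.

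There is no real obstacle here. The only points to note are that the indices $2n+a(n)$ and $2n+1-a(n)$ always lie in $\{2n,2n+1\}$, so they are valid nonnegative indices for all $n\ge 0$, and that the identity $1-x=x\oplus 1$ holds on $\{0,1\}$. One could also phrase the argument uniformly: since $a(2n+1)=a(2n)\oplus 1$, both identities say that $a(2n+\varepsilon)=a(2n)\oplus\varepsilon$ for $\varepsilon\in\{0,1\}$. Taking $\varepsilon=a(n)$ and $\varepsilon=1-a(n)$ respectively gives the two statements.
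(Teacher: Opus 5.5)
Your proof is correct and follows the same route as the paper, which simply states that both identities follow from the balance condition by case analysis on $a(n)\in\{0,1\}$; your write-up spells out those cases explicitly. The uniform reformulation $a(2n+\varepsilon)=a(2n)\oplus\varepsilon$ for $\varepsilon\in\{0,1\}$ is a tidy way of packaging the same argument.
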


\begin{proof}
 The first identity follows from~\eqref{eq:balance} by case analysis on $a(n)\in\{0,1\}$, and similarly for the second.
\end{proof}

\subsection{Base-4 recurrences}
 We now record a result that motivates the use of base-$4$ recurrences in the sequel. We let $$ a(2n)=a(n+a(n))\qquad 
 \text{or}\qquad a(2n)=a(n+1-a(n)), $$ and we let $a(2n+1)=1-a(2n)$. In both cases, we obtain ultimate periodicity, as we now show.

\begin{lemma}\label{lem:dyadic} 
 Let $a:\mathbb{N}_0\to\{0,1\}$ be balanced.
\begin{enumerate}
\renewcommand{\labelenumi}{(\alph{enumi})}
\item If $a(2n)=a(n+a(n))$ for all $n\ge 0$, then $a(0)=0$ and 
 $a$ is ultimately $2$-periodic, with 
 $$ a(2n)=0,\qquad a(2n+1)=1\qquad (n\ge 2). $$

\item If $a(2n)=a(n+1-a(n))$ for all $n\ge 0$, then $a(0)=1$ and $a$ is ultimately $2$-periodic, with 
$$ a(2n)=1,\qquad a(2n+1)=0\qquad (n\ge 2). $$
\end{enumerate}
\end{lemma}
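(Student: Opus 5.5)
The plan is a direct case analysis on the value $a(n)\in\{0,1\}$, combined with strong induction on the index, using only the balanced condition~\eqref{eq:balance}. The key observation is what the defining rule says depending on the parity of $n$. In case (a), the right-hand side $a(n+a(n))$ equals $a(n)=0$ when $a(n)=0$, and equals $a(n+1)$ when $a(n)=1$.

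\emph{Base value.} First take $n=0$ in (a). If $a(0)=1$, the rule gives $a(0)=a(1)$. Balance gives $a(1)=1-a(0)=0$, which is a contradiction. Hence $a(0)=0$ and $a(1)=1$.

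\emph{Even $n$.} Next take $n$ even with $a(n)=1$. Since $n$ is even, balance gives $a(n+1)=1-a(n)=0$, so $a(2n)=0$. If instead $a(n)=0$, then $a(2n)=a(n)=0$ directly. So $a(2n)=0$ for every even $n$, with no induction needed.

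\emph{Odd $n$.} For odd $n=2m+1$ the rule reads $a(4m+2)=0$ if $a(2m+1)=0$, and $a(4m+2)=a(2m+2)$ if $a(2m+1)=1$. In other words $a(4m+2)=a(2m+1)\,a(2m+2)$. I will prove $a(2n)=0$ for all $n\ge 2$ by strong induction on $n$.
\begin{itemize}
\item The even case is already settled.
\item For odd $n=2m+1$ with $m\ge 1$, we have $m+1\ge 2$ and $m+1<n$. The induction hypothesis (or simply the even-index case just proved, when $m+1$ is even) gives $a(2m+2)=0$. Hence $a(4m+2)=0$.
\end{itemize}
Balance then yields $a(2n+1)=1$ for $n\ge 2$. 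Note that $n=1$ gives only the tautology $a(2)=a(2)$, which is why the range starts at $n\ge 2$.

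\emph{Part (b).} This is the mirror image. The right-hand side $a(n+1-a(n))$ equals $a(n)=1$ when $a(n)=1$, and equals $a(n+1)$ when $a(n)=0$.
\begin{itemize}
\item At $n=0$: assuming $a(0)=0$ forces $a(0)=a(1)=1$, a contradiction. So $a(0)=1$.
\item For even $n$ with $a(n)=0$: $a(n+1)=1$ by balance, so $a(2n)=1$ in all cases.
\item For odd $n=2m+1$ with $m\ge 1$: either $a(n)=1$, or $a(2n)=a(2m+2)=1$ by the even case/induction.
\end{itemize}
Balance then gives $a(2n+1)=0$.

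\emph{Main obstacle.} The only delicate point is bookkeeping. One must make sure the odd case appeals to the index $2(m+1)$ only when $m+1\ge 2$, and that the small index $n=1$ is correctly excluded. Everything else is a two-line case split, so I do not expect real difficulty.
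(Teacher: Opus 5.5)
Your proof is correct and follows essentially the same route as the paper. The paper also shows $a(4m)=0$ for all $m$, reduces $a(4m+2)$ to $a(2m+2)$ or $0$ according to the value of $a(2m+1)$, and closes with the same strong induction; the only cosmetic difference is that it obtains (b) from (a) by passing to the complement $1-a$, whereas you redo the mirrored case analysis directly.
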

 
\begin{proof}
 (a) Taking $n=0$ gives $a(0)=a(a(0))$. If $a(0)=1$, then balancedness gives us that $a(1)=0$, and hence $a(0)=a(1)=0$, 
 a contradiction. Thus $a(0) = 0$ and $a(1)=1$. 

 Write $b_m=a(2m)$, so that balancedness gives that $a(2m+1)=1-b_m$. Using $a(2n)=a(n+a(n))$ with $n=2m$, we obtain $$ b_{2 
 m} = a(4 m) = a(2 m + a(2 m)) = a(2 m + b_m) = 
\begin{cases}
a(2m) = b_m, & \text{if } b_m=0;\\
a(2m+1)=1-b_m=0, & \text{if } b_m=1.
\end{cases}
$$ So, we have that $b_{2m}=0$ for all $m\ge 0$.

 Now, using $a(2n)=a(n+a(n))$ with $n=2m+1$ and $a(2m+1)=1-b_m$, we get
\begin{multline*}
b_{2m+1}=a(4m+2)=a(2m+1+a(2m+1))=a(2m+2-b_m) = \\ 
\begin{cases}
a(2m+2)=b_{m+1}, & \text{if } b_m=0;\\
a(2m+1)=0, & \text{if } b_m=1.
\end{cases}
\end{multline*}
 We claim that $b_m=0$ for all $m\ge 2$. This holds for $m=2$, since $b_2 = 0$, and for $m=3$ because the above identity with $m = 
 1$ gives $b_3=b_2$ if $b_1=0$, and $b_3=0$ if $b_1=1$; in either case $b_3=0$ since $b_2=0$. Now let $m\ge 4$. If $m$ is even 
 then $b_m=b_{2r}=0$. If $m$ is odd, write $m=2r+1$ with $r\ge 1$. If $b_r=1$ then $b_m=0$; if $b_r=0$ then $b_m=b_{r + 1}$, and 
 $r + 1 < m$, so by induction $b_{r+1}=0$. Hence $b_m=0$ for all $m\ge 2$, i.e., $a(2n)=0$ for all $n\ge 2$. The balance condition in 
 \eqref{eq:balance} then gives $a(2n+1)=1$ for all $n\ge 2$.

 (b) This follows from (a) by complementing: if $a$ satisfies (b) then $\bar a(n)=1-a(n)$ satisfies (a), and vice versa.
\end{proof}

 Lemma \ref{lem:dyadic} leads us to consider base-$4$ recurrences such that the balance identity in \eqref{eq:balance} is preserved. 

\subsection{A non-nested case}
 Fix $a(0)=0$ and $a(1)=1$, and define
\begin{equation}\label{eq:4ary-skeleton} 
 a(4 n) = F(n), \quad a(4n+1)=1-F(n), 
 a(4n+2)=G(n), \quad a(4n+3)=1-G(n), 
\end{equation}
 so that $a(2n)+a(2n+1)=1$ holds automatically. The sequences studied in this paper arise as
 specializations of~\eqref{eq:4ary-skeleton}. We first consider a sequence $(\mathcal{Q}(n))_{n\ge 0}$
 whose recurrence is not nested, but which is used in the sequel.

\begin{definition}
The \emph{Thue--Morse Quarto} sequence $\mathcal{Q}:\mathbb{N}_0\to\{0,1\}$ (\seqnum{A298952}, 
 where we use the convention $\mathcal{Q}(0)=0$ rather than $a(0)=1$ as in the OEIS entry) is 
 defined by $\mathcal{Q}(0)=0$, $\mathcal{Q}(1)=1$, and
\begin{align*}
\mathcal{Q}(4n) &= \mathcal{Q}(n), & \mathcal{Q}(4n+1) &= 1-\mathcal{Q}(n), \\
\mathcal{Q}(4n+2) &= \mathcal{Q}(2n), & \mathcal{Q}(4n+3) &= 1-\mathcal{Q}(2n). 
\end{align*}
\end{definition}

 The sequence $\mathcal{Q}$ is $4$-automatic. It is computed by the $3$-state DFAO with states in $\{A, B, C\}$, initial state $A$, and 
 output $\tau(A)=\tau(B)=0$, $\tau(C)=1$, shown in
 Figure~\ref{fig:Q-dfao}. This DFAO corresponds to the $4$-uniform morphism
$$ A\mapsto ACBC,\quad B\mapsto BCCB,\quad C\mapsto CBBC, $$
 along with a coding such that $\tau(A)=\tau(B)=0$ and $\tau(C)=1$.

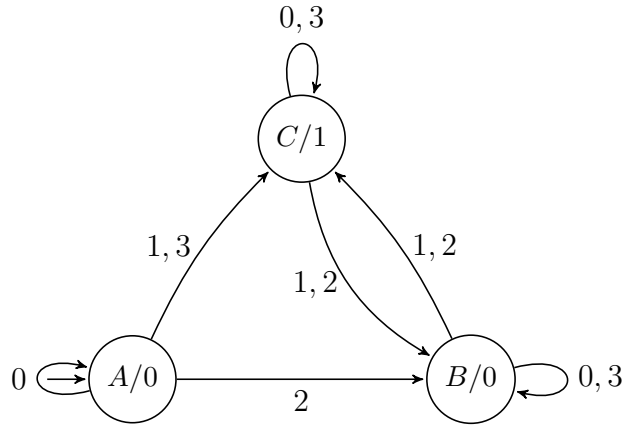
\begin{figure}[ht]
\centering
\begin{tikzpicture}[->, >=stealth', shorten >=1pt, semithick,
 every state/.style={minimum size=1.1cm, font=\small},
 initial text={}]
 \node[state,initial] (A) at (0,0) {$A/0$};
 \node[state] (B) at (4.5,0) {$B/0$};
 \node[state] (C) at (2.25,3.2){$C/1$};
 \path
 (A) edge[loop left] node[left] {$0$} (A)
 (B) edge[loop right] node[right] {$0,3$} (B)
 (C) edge[loop above] node[above] {$0,3$} (C)
 (A) edge node[below] {$2$} (B)
 (A) edge[bend left=10] node[left] {$1,3$} (C)
 (B) edge[bend right=10]node[right] {$1,2$} (C)
 (C) edge[bend right=25]node[left] {$1,2$} (B);
\end{tikzpicture}
\caption{DFAO for $\mathcal{Q}$ (MSB-first). State labels 
 are \emph{state}/\emph{output}. The initial state is $A$.}\label{fig:Q-dfao} 
\end{figure}

\begin{proposition}\label{prop:Q-rep} 
 Define $d: \mathbb{N}_0\to\{0,1\}$ so that $d(0)=0$ and $d(n) = \lfloor \log_2 n \rfloor \bmod 2$ for $n \geq 1$. 
 Then $$ \mathcal{Q}(n) = \mathbf{t}(n) \oplus d(n) \qquad (n \geq 0). $$
\end{proposition}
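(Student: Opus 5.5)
We prove the identity by strong induction on $n$, showing that the right-hand side $R(n) := \mathbf{t}(n)\oplus d(n)$ satisfies the same initial values and the same four base-$4$ rules as $\mathcal{Q}$. Since these rules determine $\mathcal{Q}(n)$ for every $n\ge 2$ from values at strictly smaller indices, this forces $R=\mathcal{Q}$.

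For the initial values, $R(0)=\mathbf{t}(0)\oplus 0=0$. Also $R(1)=\mathbf{t}(1)\oplus d(1)=1\oplus 0=1$, since $\lfloor\log_2 1\rfloor=0$. These agree with $\mathcal{Q}(0)=0$ and $\mathcal{Q}(1)=1$.

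For the recurrences, we record how $\mathbf{t}$ and $d$ behave under $n\mapsto 4n+j$ and $n\mapsto 2n$, for $n\ge 1$. Appending two binary digits $j_1j_0$ (where $j=2j_1+j_0$) adds $s_2(j)$ to the digit sum. Hence $\mathbf{t}(4n+j)=\mathbf{t}(n)\oplus \mathbf{t}(j)$, with $\mathbf{t}(j)=0,1,1,0$ for $j=0,1,2,3$. It also increases $\lfloor\log_2\rfloor$ by exactly $2$, so $d(4n+j)=d(n)$. Similarly, $\mathbf{t}(2n)=\mathbf{t}(n)$ and $d(2n)=1\oplus d(n)$. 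The four rules then follow for $n\ge 1$:
\begin{itemize}
\item $R(4n)=R(n)$ and $R(4n+1)=1\oplus R(n)$, directly from the formulas above.
\item For $4n+2$, we have $R(4n+2)=\mathbf{t}(n)\oplus 1\oplus d(n)$ and $R(2n)=\mathbf{t}(n)\oplus d(n)\oplus 1$, which are equal.
\item For $4n+3$, we have $R(4n+3)=\mathbf{t}(n)\oplus d(n)=1\oplus R(2n)$.
\end{itemize}

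The only real obstacle is the boundary case $n=0$, because the convention $d(0)=0$ breaks the rule $d(2n)=1\oplus d(n)$ there. For $n=0$ the rules assert $\mathcal{Q}(0)=\mathcal{Q}(0)$, $\mathcal{Q}(1)=1-\mathcal{Q}(0)$, $\mathcal{Q}(2)=\mathcal{Q}(0)$ and $\mathcal{Q}(3)=1-\mathcal{Q}(0)$. We check these directly against $R$: $R(2)=1\oplus 1=0$ and $R(3)=0\oplus 1=1$, which match $\mathcal{Q}(2)=0$ and $\mathcal{Q}(3)=1$.

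With the base cases $n\le 3$ verified, every $n\ge 4$ has the form $4m+j$ with $m\ge 1$. The right-hand side of its rule uses $\mathcal{Q}$ at $m$ or $2m$, both smaller than $n$. The inductive hypothesis together with the identities above therefore gives $\mathcal{Q}(n)=R(n)$, completing the induction.
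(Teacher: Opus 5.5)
Your proof is correct and follows essentially the same route as the paper: strong induction using $d(4n+r)=d(n)$, $\mathbf{t}(4n+r)=\mathbf{t}(n)\oplus\mathbf{t}(r)$ and $d(2n)=1\oplus d(n)$ for $n\ge 1$, with the same four-case check. Your explicit verification of indices $2$ and $3$ (the $n=0$ instances of the rules, where $d(2n)=1\oplus d(n)$ fails) is a step the paper's proof skips: it only calls $n=0,1$ immediate and then treats $4n+r$ with $n\ge 1$.
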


\begin{proof}
 The cases $n=0,1$ are immediate. For $n \geq 1$ and $r \in \{0,1,2,3\}$, since $\lfloor\log_2(4 n + r)\rfloor = 2 + \lfloor\log_2 n\rfloor$, 
 we have $d(4n+r) = d(n)$ for all $r$. We also use $\mathbf{t}(4n+r) = \mathbf{t}(n) \oplus \mathbf{t}(r)$, and we proceed
 by induction. 

\emph{Case $r=0$:} $\mathcal{Q}(4n)=\mathcal{Q}(n)=\mathbf{t}(n) 
 \oplus d(n)$ and $\mathbf{t}(4n)\oplus d(4n)=\mathbf{t}(n)\oplus d(n)$.

\emph{Case $r=1$:} $\mathcal{Q}(4n+1)=1\oplus\mathcal{Q}(n) = 
 1\oplus\mathbf{t}(n)\oplus d(n)$ and $\mathbf{t}(4n+1)
 \oplus d(4n+1)=(1\oplus\mathbf{t}(n))\oplus d(n)$.

\emph{Case $r=2$:} $\mathcal{Q}(4n+2)=\mathcal{Q}(2n)$. By induction, $\mathcal{Q}(2 
 n)=\mathbf{t}(2n)\oplus d(2n)=\mathbf{t}(n)\oplus(d(n) 
 \oplus 1)=1\oplus\mathbf{t}(n)\oplus d(n)$. Also $\mathbf{t}(4 n + 
 2)=\mathbf{t}(2n+1)=1\oplus\mathbf{t}(n)$, so $\mathbf{t}(4 n + 
 2)\oplus d(4n+2)=(1\oplus\mathbf{t}(n))\oplus d(n)$.

\emph{Case $r=3$:} $\mathcal{Q}(4n+3)=1\oplus\mathcal{Q}(2n)=\mathbf{t}(n)\oplus d(n)$, 
 and $\mathbf{t}(4n+3)\oplus d(4n+3)=\mathbf{t}(n)\oplus d(n)$.
\end{proof}

\section{\texorpdfstring{The sequence $\mathcal{M}_{1}$}{The sequence M1}}\label{sec:meta1} 

\begin{definition}
 Define $\mathcal{M}_{1}(0)=0$ and $\mathcal{M}_{1}(1)=1$. For $n\ge 1$, set
\begin{equation}\label{eq:M1-01} 
 \mathcal{M}_{1}(4n) = \mathcal{M}_{1}(n),\qquad \mathcal{M}_{1}(4n+1) = 1-\mathcal{M}_{1}(n).
\end{equation}
 For $n\ge 0$, set
\begin{equation}\label{eq:M1-23} 
 \mathcal{M}_{1}(4n+2) = \mathcal{M}_{1}(2n+1-\mathcal{M}_{1}(n)),\qquad
 \mathcal{M}_{1}(4n+3) = 1-\mathcal{M}_{1}(2n+1-\mathcal{M}_{1}(n)).
\end{equation}
\end{definition}

\begin{lemma}\label{lem:M1-wd} 
The sequence $\mathcal{M}_{1}(n)$ is well-defined for all $n\ge 0$ and satisfies $\mathcal{M}_{1}(2n)+\mathcal{M}_{1}(2n+1)=1$.
\end{lemma}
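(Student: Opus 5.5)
We prove both claims together by strong induction on $n$. The statement to carry along is: for every $N\ge 0$, the value $\mathcal{M}_{1}(N)$ is determined by the rules \eqref{eq:M1-01}--\eqref{eq:M1-23} from values at strictly smaller indices, and lies in $\{0,1\}$. Membership in $\{0,1\}$ is immediate, since every rule outputs either a previous value or $1$ minus a previous value. Balancedness then follows separately.

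\emph{Well-definedness.} The base cases are $N=0,1$, given by definition. Note that $N=4\cdot 0+1=1$ is not covered by \eqref{eq:M1-01}, which requires $n\ge 1$. So the only rule that could conflict with the initial data is \eqref{eq:M1-23} at $n=0$: it reads $\mathcal{M}_{1}(2)=\mathcal{M}_{1}(1-\mathcal{M}_{1}(0))=\mathcal{M}_{1}(1)=1$ and $\mathcal{M}_{1}(3)=0$. These indices exceed $1$, so there is no conflict. Each $N\ge 2$ has a unique representation $N=4n+r$ with $r\in\{0,1,2,3\}$.
\begin{itemize}
\item[$\cdot$] For $r\in\{0,1\}$ we have $n\ge 1$, and the rule refers to $\mathcal{M}_{1}(n)$ with $n<N$.
\item[$\cdot$] For $r\in\{2,3\}$ the rule first needs $\mathcal{M}_{1}(n)$, with $n<N$, known to lie in $\{0,1\}$ by induction. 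The outer argument $2n+1-\mathcal{M}_{1}(n)$ is then either $2n$ or $2n+1$, both at most $2n+1<4n+2\le N$.
\end{itemize}
Hence every value is determined by earlier ones, and the sequence is well-defined with values in $\{0,1\}$. The only point requiring care is this ordering: the inner value must be fixed, and binary, before the outer argument even makes sense.

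\emph{Balancedness.} We check $\mathcal{M}_{1}(2m)+\mathcal{M}_{1}(2m+1)=1$, splitting on the parity of $m$.
\begin{itemize}
\item[$\cdot$] If $m=2n$ with $n\ge 1$, the pair is $(4n,4n+1)$, and \eqref{eq:M1-01} gives $\mathcal{M}_{1}(n)+(1-\mathcal{M}_{1}(n))=1$.
\item[$\cdot$] If $m=0$, the pair is $(0,1)$, and $0+1=1$.
\item[$\cdot$] If $m=2n+1$ with $n\ge 0$, the pair is $(4n+2,4n+3)$, and \eqref{eq:M1-23} gives $X+(1-X)=1$ with $X=\mathcal{M}_{1}(2n+1-\mathcal{M}_{1}(n))$.
\end{itemize}
No induction is needed for this part, since the complementary structure is built into the rules.

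I expect no real obstacle. The only subtle points are the index bound $2n+1<4n+2$ and the edge case $n=0$ in \eqref{eq:M1-01}, which is why that rule is stated only for $n\ge1$.
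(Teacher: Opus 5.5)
Your proof is correct and follows the paper's argument: strong induction using $2n+1-\mathcal{M}_{1}(n)\in\{2n,2n+1\}$ and $2n+1<4n+2$, with balancedness read off from the complementary rules. You are slightly more careful than the paper in two places: you carry the $\{0,1\}$-valuedness explicitly through the induction, and you handle the pair $(0,1)$ via the initial values, since \eqref{eq:M1-01} only applies for $n\ge 1$.
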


\begin{proof}
 Well-definedness follows by induction on $m$. For $m=4n+r$, the right-hand sides in \eqref{eq:M1-01} reduce to indices~$<m$; 
 for~\eqref{eq:M1-23}, the nested
 argument $2n+1-\mathcal{M}_{1}(n)\in\{2n,2n+1\}<4n+2\le m$. Balancedness follows
 from~\eqref{eq:4ary-skeleton}, since $\mathcal{M}_{1}(4n+1)=1-\mathcal{M}_{1}(4n)$
 and $\mathcal{M}_{1}(4n+3)=1-\mathcal{M}_{1}(4n+2)$.
\end{proof}

\begin{remark}
 Since $\mathcal{M}_{1}$ is balanced, Lemma~\ref{lem:balance-xor} gives
\[
\mathcal{M}_{1}(2n+1-\mathcal{M}_{1}(n)) = \mathcal{M}_{1}(2n+1)\oplus \mathcal{M}_{1}(n),
\]
 so~\eqref{eq:M1-23} is equivalent to the non-nested relation
 $\mathcal{M}_{1}(4n+2) = \mathcal{M}_{1}(2n+1)\oplus\mathcal{M}_{1}(n)$.
\end{remark}

\subsection{A 4-state DFAO}
 Define 
\begin{equation}\label{eq:b1} 
 b(n):=\mathcal{M}_{1}(2n+1-\mathcal{M}_{1}(n)), 
\end{equation}
 and consider the pair $s(n):=(\mathcal{M}_{1}(n), b(n))$. Informally, the following result gives us that the pair $s(4n+r)$ depends only 
 on $s(n)$, 
 for $r\in\{0, 1, 2, 3\}$ and for $n \geq 1$, disregarding the trivial $n = 0$ case whereby 
 $\mathcal{M}_{1}(0)=0$. 

 \begin{lemma}\label{lem:M1-closure} 
 Let $x=\mathcal{M}_{1}(n)$ and $y=b(n)$, and let $n \geq 1$. Then 
\begin{align}
 s(4n) & = (x, y), \label{s4n} \\ 
 s(4n + 1) & = (1 \oplus x,\; x \oplus y), \label{4np1} \\ 
 s(4 n + 2) & = (y,\; 1 \oplus x), \text{and} \label{4np2} \\
 s(4n + 3) & = (1 \oplus y,\; x \oplus y \oplus 1). \label{4np3} 
\end{align}
 \end{lemma}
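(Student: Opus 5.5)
The plan is to remove the nesting first and then compute each coordinate of $s(4n+r)$ directly from the defining rules. Since $\mathcal{M}_{1}$ is balanced (Lemma~\ref{lem:M1-wd}), Lemma~\ref{lem:balance-xor} gives, for every $m\ge 0$,
\[
b(m)=\mathcal{M}_{1}(2m+1)\oplus \mathcal{M}_{1}(m).
\]
Also, by \eqref{eq:M1-23}, $\mathcal{M}_{1}(4m+2)=b(m)$ and $\mathcal{M}_{1}(4m+3)=1\oplus b(m)$ for all $m\ge 0$. In particular $y=b(n)=\mathcal{M}_{1}(2n+1)\oplus x$. Balancedness further gives $y=1\oplus\mathcal{M}_{1}(2n)\oplus x$.

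\emph{First coordinates.} These follow at once from the definition. For $n\ge 1$, \eqref{eq:M1-01} gives $\mathcal{M}_{1}(4n)=x$ and $\mathcal{M}_{1}(4n+1)=1\oplus x$. Then \eqref{eq:M1-23} gives $\mathcal{M}_{1}(4n+2)=y$ and $\mathcal{M}_{1}(4n+3)=1\oplus y$.

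\emph{Second coordinates.} Here I write $b(4n+r)=\mathcal{M}_{1}(8n+2r+1)\oplus\mathcal{M}_{1}(4n+r)$ and reduce the first term using the base-$4$ rules at index $2n$ or $2n+1$.
\begin{itemize}
\item $r=0$: since $8n+1=4(2n)+1$ with $2n\ge 1$, we get $b(4n)=1\oplus\mathcal{M}_{1}(2n)\oplus x=y$, by the expression for $y$ above.
\item $r=1$: here $\mathcal{M}_{1}(8n+3)=1\oplus b(2n)$. Next, $b(2n)=\mathcal{M}_{1}(4n+1)\oplus\mathcal{M}_{1}(2n)=1\oplus x\oplus \mathcal{M}_{1}(2n)$. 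Hence $b(4n+1)=(x\oplus\mathcal{M}_{1}(2n))\oplus(1\oplus x)=1\oplus\mathcal{M}_{1}(2n)=x\oplus y$.
\item $r=2$: $\mathcal{M}_{1}(8n+5)=1\oplus\mathcal{M}_{1}(2n+1)$ and $\mathcal{M}_{1}(4n+2)=y=\mathcal{M}_{1}(2n+1)\oplus x$. So $b(4n+2)=1\oplus x$.
\item $r=3$: $\mathcal{M}_{1}(8n+7)=1\oplus b(2n+1)$. Next, $b(2n+1)=\mathcal{M}_{1}(4n+3)\oplus\mathcal{M}_{1}(2n+1)=1\oplus y\oplus\mathcal{M}_{1}(2n+1)$. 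Hence $b(4n+3)=(y\oplus\mathcal{M}_{1}(2n+1))\oplus(1\oplus y)=1\oplus\mathcal{M}_{1}(2n+1)=1\oplus x\oplus y$.
\end{itemize}

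The main point of care is the range of validity of \eqref{eq:M1-01}, which holds only for indices $4k$ and $4k+1$ with $k\ge 1$. Every use above has $k\in\{n,2n,2n+1\}$ with $n\ge 1$, so the rule applies; this is also why $n=0$ is excluded from the statement. Apart from that, the work is bookkeeping in $\mathbb{F}_2$. The recurring trick is to eliminate the auxiliary values $\mathcal{M}_{1}(2n)$ and $\mathcal{M}_{1}(2n+1)$ via the expressions for $y$ above.
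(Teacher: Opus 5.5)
Your proof is correct and uses the same ingredients as the paper's (Lemma~\ref{lem:balance-xor} together with the base-$4$ rules at indices $2n$ and $2n+1$), but it is more streamlined: you apply $b(m)=\mathcal{M}_{1}(2m+1)\oplus\mathcal{M}_{1}(m)$ at every index $m\in\{4n+r,\,2n,\,2n+1\}$, so each case becomes pure $\mathbb{F}_2$ algebra, whereas the paper resolves each nested argument such as $8n+2r+1-\mathcal{M}_{1}(4n+r)$ by explicit case splits on the values of $x$ and $y$. One small correction to your closing remark: since $\mathcal{M}_{1}(0)=0$ and $\mathcal{M}_{1}(1)=1$, the rules in \eqref{eq:M1-01} also hold at $k=0$, so your argument and the four identities remain valid for $n=0$; excluding $n=0$ is therefore a convenience rather than a necessity.
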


\begin{proof}
 By Lemma~\ref{lem:M1-wd} and Lemma~\ref{lem:balance-xor}, we have 
 $y = b(n) = \mathcal{M}_{1}(2n+1)\oplus \mathcal{M}_{1}(n) = (1-\mathcal{M}_{1}(2n))\oplus x$, so that
\begin{equation*}
 \mathcal{M}_{1}(2n) = 1 \oplus x \oplus y, \qquad
 \mathcal{M}_{1}(2n+1) = x \oplus y.
\end{equation*}

\textbf{Case $r = 0$.} The first coordinate is $\mathcal{M}_{1}(4n) = \mathcal{M}_{1}(n) = x$. The latter coordinate satisfies 
 $b(4n) = \mathcal{M}_{1}(8n + 1 - \mathcal{M}_{1}(4n)) = \mathcal{M}_{1}(8n + 1 - x)$. If $x = 0$, then $b(4n) = \mathcal{M}_{1}(8 n + 
 1) = \mathcal{M}_{1}(4(2n)+1) = 1 - \mathcal{M}_{1}(2n) = 1 - (1 \oplus x \oplus y) = 1 - (1 \oplus y) = y$. If $x = 1$, then $b(4n) = \mathcal{M}_{1}(8n) = 
 \mathcal{M}_{1}(4(2n)) = \mathcal{M}_{1}(2n) = 1 \oplus x \oplus y = y$. In both cases, we find that 
 \eqref{s4n} holds. 

\textbf{Case $r = 1$.} The first coordinate is $\mathcal{M}_{1}(4n+1) = 1 - x = 1 \oplus x$. The latter coordinate satisfies 
 $b(4n+1) = \mathcal{M}_{1}(8n + 2 + x)$. If $x = 0$, then $b(4n+1) = \mathcal{M}_{1}(8 n + 2) = \mathcal{M}_{1}(4(2n)+2) = b(2n)$. 
 Using the property that $b(2n) = \mathcal{M}_{1}(4n + 1 - \mathcal{M}_{1}(2n))$, together with $\mathcal{M}_{1}(2n) = 1 \oplus y$, 
 this yields $b(2n) = \mathcal{M}_{1}(4n + y)$. So, if $y = 0$, then $\mathcal{M}_{1}(4n) = x = 0$, and if $y = 1$, then 
 $\mathcal{M}_{1}(4n+1) = 
 1 - x = 1$, so that $b(4n+1) = y$. Similarly, if $x = 1$, then $b(4n+1) = \mathcal{M}_{1}(8n+3) = \mathcal{M}_{1}(4(2n)+3) = 1 - b(2n)$.
 With $\mathcal{M}_{1}(2n) = y$, $b(2n) = \mathcal{M}_{1}(4n + 1 - y)$, we find that if $y = 0$, then $1 - \mathcal{M}_{1}(4n+1) = x = 
 1$, and if $y = 1$, then $1 - \mathcal{M}_{1}(4n) = 1-x = 0$, so that $b(4n+1) = 1 \oplus y$. Consequently, we obtain that 
 \eqref{4np1} holds. 

\textbf{Case $r = 2$.} The first coordinate is $\mathcal{M}_{1}(4n+2) = b(n) = y$. The latter coordinate satisfies $b(4 n + 
 2) = \mathcal{M}_{1}(8n + 5 - y)$. If $y = 0$, then $\mathcal{M}_{1}(8n+5) = \mathcal{M}_{1}(4(2n+1)+1) = 1 - \mathcal{M}_{1}(2n+1) = 
 1 \oplus (x \oplus y) = 1 \oplus x$. If $y = 1$, then $\mathcal{M}_{1}(8n+4) = \mathcal{M}_{1}(4(2n+1)) = \mathcal{M}_{1}(2n+1) = 
 x \oplus y = 1 \oplus x$. In both cases, we see that \eqref{4np2} holds. 

\textbf{Case $r = 3$.} The first coordinate is $\mathcal{M}_{1}(4n+3) = 1 - y = 1 \oplus y$. The latter coordinate 
 satisfies $b(4n+3) = 
 \mathcal{M}_{1}(8n + 6 + y)$. If $y = 0$, then $\mathcal{M}_{1}(8n+6) = \mathcal{M}_{1}(4(2n+1)+2) = b(2n+1)$.
Since $\mathcal{M}_{1}(2n+1) = x$, $b(2n+1) = \mathcal{M}_{1}(4n + 3 - x)$. So, if $x = 0$, then $\mathcal{M}_{1}(4n+3) = 1 \oplus y = 
 1$; if $x = 1$, then $\mathcal{M}_{1}(4n+2) = y = 0$; so $b(4n+3) = x \oplus y \oplus 1$. If $y = 1$, then $\mathcal{M}_{1}(8 n + 
 7) = \mathcal{M}_{1}(4(2n+1)+3) = 1 - b(2n+1)$. Since $\mathcal{M}_{1}(2n+1) = x \oplus 1$, we find that $b(2n+1) = \mathcal{M}_{1}(4 
 n + 2 + x)$. So, if $x = 0$, then $1 - \mathcal{M}_{1}(4n+2) = 1 - y = 0$; if $x = 1$, then $1 - \mathcal{M}_{1}(4n+3) = y = 1$, so that 
 $b(4n+3) = x \oplus y \oplus 1$. This gives us the desired relation in \eqref{4np3}. 
\end{proof}

\begin{theorem}\label{thm:M1-dfao} 
The sequence $\mathcal{M}_{1}$ is $4$-automatic and is computed (MSB-first, base $4$) by the DFAO 
 with states $\{0,1,2,3\}$, initial state $0$, output map $\tau(0)=\tau(2)=0$, $\tau(1)=\tau(3)=1$, and transition table
\begin{center}
\normalfont
\begin{tabular}{c|cccc}
$\delta$ & 0 & 1 & 2 & 3 \\ \hline
0 & 0 & 1 & 1 & 2 \\
1 & 1 & 2 & 3 & 0 \\
2 & 2 & 3 & 0 & 1 \\
3 & 3 & 0 & 2 & 3 \\
\end{tabular}
\end{center}
\end{theorem}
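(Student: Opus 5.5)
The plan is to identify the four DFAO states with the four possible values of the pair $s(n)=(\mathcal{M}_{1}(n),b(n))\in\mathbb{F}_2^2$ and read off the transition table from Lemma~\ref{lem:M1-closure}. The output map $\tau$ should be the first coordinate, so states $0,2$ must be pairs with $x=0$ and states $1,3$ pairs with $x=1$. The initial state should encode $s(0)$. We have $\mathcal{M}_{1}(0)=0$ and $b(0)=\mathcal{M}_{1}(1)=1$, so state $0$ is $(0,1)$. Applying \eqref{4np1}--\eqref{4np3} to $(0,1)$ then forces the encoding
\[
0\leftrightarrow(0,1),\qquad 1\leftrightarrow(1,1),\qquad 2\leftrightarrow(0,0),\qquad 3\leftrightarrow(1,0).
\]

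Next I will check that, under this encoding, the map $(x,y)\mapsto s(4n+r)$ from Lemma~\ref{lem:M1-closure} reproduces every row of the table. For example, from $(1,1)$ the lemma gives $(1,1),(0,0),(1,0),(0,1)$ for $r=0,1,2,3$, i.e.\ states $1,2,3,0$. From $(0,0)$ it gives $(0,0),(1,0),(0,1),(1,1)$, i.e.\ states $2,3,0,1$. From $(1,0)$ it gives $(1,0),(0,1),(0,0),(1,0)$, i.e.\ states $3,0,2,3$. From $(0,1)$ it gives $(0,1),(1,1),(1,1),(0,0)$, i.e.\ states $0,1,1,2$. These are routine $\mathbb{F}_2$ evaluations of \eqref{s4n}--\eqref{4np3}.

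Then I argue by induction on the number of base-$4$ digits of $n$, MSB-first, that the state reached after reading the digits of $n$ is the code of $s(n)$. For this, $\delta(\text{state of }s(n),r)$ must equal the state of $s(4n+r)$. Reading leading zeros keeps the automaton in state $0$, consistent with $s(0)$, and $\tau$ then extracts $\mathcal{M}_{1}(n)$. Finiteness of the DFAO gives $4$-automaticity.

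The one delicate point is that Lemma~\ref{lem:M1-closure} is only stated for $n\ge1$, so the first digit, i.e.\ the step from $n=0$ to $r\in\{1,2,3\}$, must be checked by hand.
\begin{itemize}
\item $s(1)$: we have $\mathcal{M}_{1}(1)=1$ and $b(1)=\mathcal{M}_{1}(2)=\mathcal{M}_{1}(1)=1$, so $s(1)=(1,1)$, which is state $1$.
\item $s(2)$: we have $\mathcal{M}_{1}(2)=1$ and $b(2)=\mathcal{M}_{1}(4)=\mathcal{M}_{1}(1)=1$, so $s(2)=(1,1)$, again state $1$.
\item $s(3)$: we have $\mathcal{M}_{1}(3)=0$ and $b(3)=\mathcal{M}_{1}(7)=1-\mathcal{M}_{1}(2)=0$, so $s(3)=(0,0)$, which is state $2$.
\end{itemize}
All three agree with row $0$ of the table. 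The self-loop $\delta(0,0)=0$ is harmless, since it only processes leading zeros. I expect this base-case verification and the consistency of the encoding to be the only real obstacles; everything else is bookkeeping on top of Lemma~\ref{lem:M1-closure}.
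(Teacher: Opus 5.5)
Your proposal is correct and follows the paper's proof. You use the same encoding $0\leftrightarrow(0,1)$, $1\leftrightarrow(1,1)$, $2\leftrightarrow(0,0)$, $3\leftrightarrow(1,0)$, read the transition table off Lemma~\ref{lem:M1-closure}, and take $s(0)=(0,1)$ as the initial state. Your explicit computation of $s(1)$, $s(2)$, $s(3)$, together with the leading-zero self-loop, usefully supplies the $n=0$ step that the paper's proof leaves implicit, since the lemma is stated only for $n\ge 1$.
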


\begin{proof}
Lemma~\ref{lem:M1-closure} gives four $\mathbb{F}_2$-affine transition rules:
\[
\begin{aligned}
 (x,y) &\xrightarrow{\;0\;} (x,\; y), &
 (x,y) &\xrightarrow{\;1\;} (1 \oplus x,\; x \oplus y), \\
 (x,y) &\xrightarrow{\;2\;} (y,\; 1 \oplus x), &
 (x,y) &\xrightarrow{\;3\;} (1 \oplus y,\; x \oplus y \oplus 1).
\end{aligned}
\]
Encoding the states as
$\mathsf{0} = (0,1)$,
$\mathsf{1} = (1,1)$,
$\mathsf{2} = (0,0)$,
$\mathsf{3} = (1,0)$,
these yield the stated transition table. The initial state is $s(0) = (\mathcal{M}_{1}(0),\, b(0)) = (0,\, \mathcal{M}_{1}(1)) = 
 (0,1)$, corresponding to state~$\mathsf{0}$. The output reads the first coordinate $x = \mathcal{M}_{1}(n)$.
\end{proof}

\begin{figure}[ht]
\centering
\begin{tikzpicture}[->, >=stealth', shorten >=1pt, semithick,
 every state/.style={minimum size=1.1cm, font=\small},
 node distance=2.5cm, initial text={}]
 \node[state,initial] (S0) {$0/0$};
 \node[state] (S1) [right of=S0] {$1/1$};
 \node[state] (S2) [right of=S1] {$2/0$};
 \node[state] (S3) [right of=S2] {$3/1$};
 \path
 (S0) edge[loop above] node[above] {$0$} (S0)
 (S1) edge[loop above] node[above] {$0$} (S1)
 (S2) edge[loop above] node[above] {$0$} (S2)
 (S3) edge[loop below] node[below] {$0,3$} (S3)
 (S0) edge[bend left=25] node[above] {$1,2$} (S1)
 (S1) edge[bend left=25] node[above] {$1$} (S2)
 (S2) edge[bend left=25] node[above] {$1$} (S3)
 (S1) edge[bend left=25] node[above] {$3$} (S0)
 (S2) edge[bend left=25] node[above] {$3$} (S1)
 (S3) edge[bend left=25] node[above] {$2$} (S2)
 (S0) edge[bend left=65] node[above] {$3$} (S2)
 (S1) edge[bend left=65] node[above] {$2$} (S3)
 (S2) edge[bend left=45] node[below] {$2$} (S0)
 (S3) edge[bend left=60] node[below] {$1$} (S0);
\end{tikzpicture}
\caption{DFAO for $\mathcal{M}_{1}$ (4 states, base 4, MSB-first). State labels 
 are \emph{state}/\emph{output}. The initial state is $\mathsf{0}$.} 
\end{figure}

\begin{remark}
 The four transition rules of the DFAO of $\mathcal{M}_{1}$ are $\mathbb{F}_2$-affine in
 $(x,y)$; the same is true for $\mathcal{M}_{2}$ (Section~\ref{sec:meta2}). For
 each of the sequences $\mathcal{Q}$, $\mathcal{M}_{1}$, and $\mathcal{M}_{2}$, the affine
 form is a consequence of Lemma~\ref{lem:balance-xor}.
\end{remark}

 The defining recurrence of $\mathcal{M}_{1}$ is not directly an instance of Theorem~\ref{AScrit}, since the argument $2n + 
 1-\mathcal{M}_{1}(n)$ in the nested recurrence in \eqref{eq:M1-23} depends on the value $\mathcal{M}_{1}(n)$. The $b$-sequence
 defined in~\eqref{eq:b1} removes this dependence. 
 Furthermore, Lemma~\ref{lem:balance-xor} gives us that 
\[
 \mathcal{M}_{1}(4n+2) = \mathcal{M}_{1}(2n+1) \oplus \mathcal{M}_{1}(n).
\]
 The XOR identity is a direct consequence of Lemma~\ref{lem:balance-xor}, since
 $\mathcal{M}_{1}$ is balanced. 

\begin{remark}
 By contrast, Allouche and Shallit~\cite{AlloucheShallit2012} show that the frequency sequence $F(n)$ of the Hofstadter variant $V =
 Q_{1,4}$ satisfies a recurrence of the form $F(2a) = g(F(a-2),F(a-1),F(a),F(a+1))$ with fixed shifts. This satisfies the required conditions in 
 Theorem~\ref{AScrit}. In our setting, the sequences we consider \emph{themselves}, as opposed to 
 associated frequency sequences, 
 are meta-automatic. In this direction, Lemma~\ref{lem:balance-xor} provides a key tool, for our purposes. 
\end{remark}

\begin{corollary}\label{cor:M1-morphism} 
The coding $\tau\colon \mathsf{0},\mathsf{2} \mapsto 0$ and $\mathsf{1},\mathsf{3} \mapsto 1$ applied 
 to the fixed point of the $4$-uniform morphism
\[
 \mathsf{0} \to \mathsf{0112}, \quad
 \mathsf{1} \to \mathsf{1230}, \quad
 \mathsf{2} \to \mathsf{2301}, \quad
 \mathsf{3} \to \mathsf{3023}
\]
starting from $\mathsf{0}$ produces the sequence $\mathcal{M}_{1}$.
\end{corollary}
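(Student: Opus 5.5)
The plan is to use the standard correspondence between a $k$-uniform DFAO and a $k$-uniform morphism (Cobham's theorem, as presented by Allouche and Shallit). Given the DFAO of Theorem~\ref{thm:M1-dfao}, with states $\{\mathsf{0},\mathsf{1},\mathsf{2},\mathsf{3}\}$ and transition function $\delta$, define $\varphi(s) = \delta(s,0)\,\delta(s,1)\,\delta(s,2)\,\delta(s,3)$. Reading the rows of the transition table gives exactly $\mathsf{0}\to\mathsf{0112}$, $\mathsf{1}\to\mathsf{1230}$, $\mathsf{2}\to\mathsf{2301}$, $\mathsf{3}\to\mathsf{3023}$, so the first step is simply to check that the stated morphism is the one read off from the table.

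The second step is to verify the prerequisites for iterating the morphism. Since $\delta(\mathsf{0},0)=\mathsf{0}$, the word $\varphi(\mathsf{0})$ begins with $\mathsf{0}$. Hence $\varphi$ is prolongable on $\mathsf{0}$, and the fixed point $\mathbf{w}=\lim_k \varphi^k(\mathsf{0})$ exists.

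The key claim is that $\mathbf{w}(n)=\delta^*(\mathsf{0},\langle n\rangle_4)$ for all $n\ge 0$, where $\langle n\rangle_4$ is the MSB-first base-$4$ representation. I would prove this by induction on $n$, using $\mathbf{w}=\varphi(\mathbf{w})$, which gives $\mathbf{w}(4n+r)=\delta(\mathbf{w}(n),r)$ for $r\in\{0,1,2,3\}$. For $n\ge1$, the representation of $4n+r$ is $\langle n\rangle_4$ followed by the digit $r$, so
\[
\delta^*(\mathsf{0},\langle 4n+r\rangle_4)=\delta\bigl(\delta^*(\mathsf{0},\langle n\rangle_4),r\bigr),
\]
and the induction closes. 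The base case $n=0$, and the cases $4\cdot 0+r$, hold because leading zeros are harmless: $\delta(\mathsf{0},0)=\mathsf{0}$, so reading the empty word (or any string of zeros) leaves the DFAO in state $\mathsf{0}$. This is consistent with $\mathbf{w}(r)=\delta(\mathsf{0},r)$.

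Finally, applying the output map $\tau$ of Theorem~\ref{thm:M1-dfao} gives $\tau(\mathbf{w}(n))=\mathcal{M}_{1}(n)$, which is the claim. The only step needing care is the leading-zero and initial-state consistency at $n=0$. That requires the self-loop of state $\mathsf{0}$ on digit $0$, and it is also what makes the morphism prolongable, so I expect no real obstacle. Everything else is direct transcription from Theorem~\ref{thm:M1-dfao}, whose proof via Lemma~\ref{lem:M1-closure} already establishes that $s(4n+r)$ is the appropriate function of $s(n)$.
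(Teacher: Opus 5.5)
Your proposal is correct and follows the same route as the paper, which reads each morphism image $\delta(q,0)\,\delta(q,1)\,\delta(q,2)\,\delta(q,3)$ off the transition table of Theorem~\ref{thm:M1-dfao} and uses the DFAO's output map as the coding. You also spell out the standard DFAO--morphism correspondence that the paper leaves implicit: prolongability from $\delta(\mathsf{0},0)=\mathsf{0}$, and the induction via $\mathbf{w}(4n+r)=\delta(\mathbf{w}(n),r)$ with the leading-zero base case.
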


\begin{proof}
 For each state $q$, the morphism image is $\delta(q,0)\,\delta(q,1)\,\delta(q,2)\,\delta(q,3)$, read from the 
 transition table of Theorem~\ref{thm:M1-dfao}.
\end{proof}

\section{\texorpdfstring{The sequence $\mathcal{M}_{2}$}{The sequence M2}}\label{sec:meta2} 

\begin{definition}
 Define $\mathcal{M}_{2}(0)=0$ and $\mathcal{M}_{2}(1)=1$. For $n\ge 1$, set
\begin{equation}\label{eq:M2-01} 
 \mathcal{M}_{2}(4n) = \mathcal{M}_{2}(2n+\mathcal{M}_{2}(n)),\qquad
 \mathcal{M}_{2}(4n+1) = 1-\mathcal{M}_{2}(4n).
\end{equation}
 For $n\ge 0$, set 
\begin{equation}\label{eq:M2-23} 
 \mathcal{M}_{2}(4n+2) = \mathcal{M}_{2}(2n+1-\mathcal{M}_{2}(n)),\qquad
 \mathcal{M}_{2}(4n+3) = 1-\mathcal{M}_{2}(4n+2).
\end{equation}
\end{definition}

\begin{theorem}
 The sequence $\mathcal{M}_{2}$ is well-defined and satisfies \eqref{eq:balance}.
\end{theorem}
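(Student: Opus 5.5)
We argue by strong induction on $m$, in the same way as Lemma~\ref{lem:M1-wd}. The claim at stage $m$ is that $\mathcal{M}_{2}(m)$ is determined by the defining rules from values at strictly smaller indices, and that it lies in $\{0,1\}$. The base values $\mathcal{M}_{2}(0)=0$ and $\mathcal{M}_{2}(1)=1$ are given. For $m\ge 2$ write $m=4n+r$ with $r\in\{0,1,2,3\}$. Note that $m\in\{2,3\}$ corresponds to $n=0$, $r\in\{2,3\}$, which is covered by~\eqref{eq:M2-23}. The cases $n=0$, $r\in\{0,1\}$ are exactly the initial values, since \eqref{eq:M2-01} is only imposed for $n\ge 1$. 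So every index is covered exactly once.

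The indices on the right-hand sides fall into four cases.

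\emph{Case $r=0$, $n\ge 1$.} By the induction hypothesis $\mathcal{M}_{2}(n)\in\{0,1\}$ is already defined, since $n<m$. The nested argument $2n+\mathcal{M}_{2}(n)$ lies in $\{2n,2n+1\}$. Because $2n+1<4n$ for $n\ge 1$, it is a smaller index, and its value is already defined and binary.

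\emph{Case $r=1$.} The rule refers to $\mathcal{M}_{2}(4n)$, and $4n<m$.

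\emph{Case $r=2$.} The argument $2n+1-\mathcal{M}_{2}(n)$ lies in $\{2n,2n+1\}$, and $2n+1<4n+2$ for all $n\ge 0$. For $n=0$ it is $1-\mathcal{M}_{2}(0)=1$.

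\emph{Case $r=3$.} The rule refers to $\mathcal{M}_{2}(4n+2)<m$.

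In every case the value at $m$ is either $\mathcal{M}_{2}$ at a smaller index or $1$ minus such a value, so it stays in $\{0,1\}$. This is the one point that needs care. The nested argument only makes sense if $\mathcal{M}_{2}(n)\in\{0,1\}$, so that it lands in $\{2n,2n+1\}$ and is a nonnegative index. For this reason binarity has to be carried along in the same induction rather than assumed.

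Balancedness then follows directly. For $n\ge 0$ we must show $\mathcal{M}_{2}(2n)+\mathcal{M}_{2}(2n+1)=1$. If $n$ is odd, say $n=2k+1$, then $2n=4k+2$, and \eqref{eq:M2-23} gives $\mathcal{M}_{2}(4k+3)=1-\mathcal{M}_{2}(4k+2)$. If $n=2k$ with $k\ge 1$, then \eqref{eq:M2-01} gives $\mathcal{M}_{2}(4k+1)=1-\mathcal{M}_{2}(4k)$. The remaining case $n=0$ is checked directly: $\mathcal{M}_{2}(0)+\mathcal{M}_{2}(1)=0+1=1$. This shows that \eqref{eq:balance} holds. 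I expect no real obstacle beyond the bookkeeping of the $n=0$ boundary cases and the simultaneous induction on binarity.
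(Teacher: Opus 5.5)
Your proposal is correct and follows the same route as the paper: strong induction on the index, checking that each nested argument lies in $\{2n,2n+1\}$ and hence strictly below $4n$ (for $n\ge 1$) or $4n+2$, with balancedness read off from the complementary rules. Your proof is somewhat more careful than the paper's sketch. You carry binarity through the induction explicitly and handle the $n=0$ boundary cases (both for coverage of indices and for $\mathcal{M}_{2}(0)+\mathcal{M}_{2}(1)=1$), which the paper leaves implicit.
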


\begin{proof}
 Well-definedness follows by induction as in Lemma~\ref{lem:M1-wd}. For the recurrence rules associated with the arguments $4n$ and 
 $4n+1$, we have $n\ge 1$, so $2n+\mathcal{M}_{2}(n)\le 2n+1<4n$. For the arguments $4n+2$ and $4n+3$, the nested 
 argument satisfies
 $2n+1-\mathcal{M}_{2}(n)\le 2n+1<4n+2$. Balancedness is immediate from the
 complementary definitions in~\eqref{eq:M2-01}--\eqref{eq:M2-23}.
\end{proof}

 Define the pair
\begin{equation}\label{eq:M2-state} 
 s(n) := \bigl(\mathcal{M}_{2}(n),\,\mathcal{M}_{2}(2n+\mathcal{M}_{2}(n))\bigr)\in\mathbb{F}_2^2.
\end{equation}

\begin{lemma}\label{lem:M2-recover} 
 If $s(n)=(x,y)$, then $$\mathcal{M}_{2}(2 n) = x\oplus y, \qquad \mathcal{M}_{2}(2n+1)=x\oplus y\oplus 1.$$
\end{lemma}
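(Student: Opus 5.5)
By the balance property of the preceding theorem, $\mathcal{M}_{2}(2n+1)=1\oplus\mathcal{M}_{2}(2n)$. The second identity therefore follows from the first, and it suffices to prove $\mathcal{M}_{2}(2n)=x\oplus y$.

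The plan is to apply the first identity of Lemma~\ref{lem:balance-xor} to $a=\mathcal{M}_{2}$, which is legitimate since $\mathcal{M}_{2}$ is balanced. It gives
\[
y=\mathcal{M}_{2}(2n+\mathcal{M}_{2}(n))=\mathcal{M}_{2}(2n)\oplus\mathcal{M}_{2}(n)=\mathcal{M}_{2}(2n)\oplus x .
\]
XORing both sides with $x$ yields $\mathcal{M}_{2}(2n)=x\oplus y$, and then $\mathcal{M}_{2}(2n+1)=x\oplus y\oplus 1$.

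To keep the argument self-contained, I would also record the two-case check behind Lemma~\ref{lem:balance-xor}. If $x=0$, then $y=\mathcal{M}_{2}(2n)$, so $\mathcal{M}_{2}(2n)=y=x\oplus y$. If $x=1$, then $y=\mathcal{M}_{2}(2n+1)=1-\mathcal{M}_{2}(2n)$, so $\mathcal{M}_{2}(2n)=1\oplus y=x\oplus y$.

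There is no real obstacle here. The only point needing care is that balancedness holds for every $n\ge 0$, including $n=0$: there $\mathcal{M}_{2}(0)+\mathcal{M}_{2}(1)=1$ holds directly from the initial values, so the identity is valid for all $n\ge0$.
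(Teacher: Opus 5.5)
Your proof is correct and is essentially the paper's argument: the paper does exactly your two-case check on $x$, using $y=\mathcal{M}_{2}(2n)$ when $x=0$ and $y=\mathcal{M}_{2}(2n+1)=1-\mathcal{M}_{2}(2n)$ when $x=1$, and then balancedness. Packaging this as an application of Lemma~\ref{lem:balance-xor} followed by XORing with $x$ is only a cosmetic difference, and your remark that balancedness covers $n=0$ via the initial values is a sound extra check.
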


\begin{proof}
 If $x=0$, then $y=\mathcal{M}_{2}(2n)$; if $x=1$, then $y=\mathcal{M}_{2}(2n+1)$, and balancedness gives that
 $\mathcal{M}_{2}(2n)=1-y$. The identities follow.
\end{proof}

\begin{theorem}\label{thm:M2-affine} 
Writing $s(n)=(x,y)$, we have
\begin{align*}
 s(4n) &= (y,\ x\oplus y),\\
 s(4n+1) &= (y\oplus 1,\ x\oplus y),\\
 s(4n+2) &= (y\oplus 1,\ x\oplus y\oplus 1),\\
 s(4n+3) &= (y,\ x\oplus y\oplus 1).
\end{align*}
Consequently $\mathcal{M}_{2}$ is $4$-automatic with at most $4$ states.
\end{theorem}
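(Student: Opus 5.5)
The plan is to first make the state $s(n)$ non-nested, and then compute everything through two XOR recurrences obtained from Lemma~\ref{lem:balance-xor}.

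Since $\mathcal{M}_{2}$ is balanced, Lemma~\ref{lem:balance-xor} gives $\mathcal{M}_{2}(2m+\mathcal{M}_{2}(m))=\mathcal{M}_{2}(2m)\oplus\mathcal{M}_{2}(m)$. Hence
\[
s(m)=\bigl(\mathcal{M}_{2}(m),\ \mathcal{M}_{2}(2m)\oplus\mathcal{M}_{2}(m)\bigr).
\]
This is equivalent to Lemma~\ref{lem:M2-recover}, which gives $\mathcal{M}_{2}(2n)=x\oplus y$ and $\mathcal{M}_{2}(2n+1)=x\oplus y\oplus 1$. Applying Lemma~\ref{lem:balance-xor} to the defining rules \eqref{eq:M2-01}--\eqref{eq:M2-23} turns them into
\[
\mathcal{M}_{2}(4k)=\mathcal{M}_{2}(2k)\oplus\mathcal{M}_{2}(k)\quad (k\ge 1),\qquad \mathcal{M}_{2}(4k+2)=\mathcal{M}_{2}(2k+1)\oplus\mathcal{M}_{2}(k)\quad (k\ge 0).
\]
The values at $4k+1$ and $4k+3$ then follow by complementation.

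For $n\ge 1$, the first coordinates come straight from these rules:
\[
\mathcal{M}_{2}(4n)=(x\oplus y)\oplus x=y,\quad \mathcal{M}_{2}(4n+1)=1\oplus y,\quad \mathcal{M}_{2}(4n+2)=(x\oplus y\oplus 1)\oplus x=y\oplus 1,\quad \mathcal{M}_{2}(4n+3)=y.
\]
For the second coordinates I need $\mathcal{M}_{2}(8n+2r)$ for $r=0,1,2,3$. I would write $8n=4(2n)$, $8n+2=4(2n)+2$, $8n+4=4(2n+1)$ and $8n+6=4(2n+1)+2$, and apply the XOR rules with $k=2n$ or $k=2n+1$. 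This feeds back in the values of $\mathcal{M}_{2}$ at $4n,\dots,4n+3$ just computed, together with $\mathcal{M}_{2}(2n)$ and $\mathcal{M}_{2}(2n+1)$. A short calculation should give
\[
\mathcal{M}_{2}(8n)=\mathcal{M}_{2}(8n+4)=x,\qquad \mathcal{M}_{2}(8n+2)=\mathcal{M}_{2}(8n+6)=x\oplus 1.
\]
XORing each of these with the matching first coordinate gives second coordinates $x\oplus y$, $x\oplus y$, $x\oplus y\oplus 1$, $x\oplus y\oplus 1$, as claimed.

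The only delicate point, and the step I expect to need care, is the range restriction: the rule for $\mathcal{M}_{2}(4k)$ only holds for $k\ge 1$. For $n\ge 1$ this is harmless, because every $k$ used above ($n$, $2n$, $2n+1$) is at least $1$. For $n=0$, where the transitions matter for the automaton, I would check directly from small values. These are $\mathcal{M}_{2}(0..6)=0,1,1,0,0,1,1$, so
\[
s(0)=(0,0),\quad s(1)=(1,0),\quad s(2)=(1,1),\quad s(3)=(0,1),
\]
which agrees with the four formulas at $(x,y)=(0,0)$.

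Finally, these four $\mathbb{F}_2$-affine maps on $\mathbb{F}_2^2$ define a DFAO read MSB-first in base $4$. Its states are the four pairs, its initial state is $s(0)=(0,0)$, and its output is the first coordinate. Because leading zeros fix $(0,0)$ (since $s(4\cdot 0)=s(0)$), it follows by induction on the number of base-$4$ digits that $\mathcal{M}_{2}$ is $4$-automatic with at most $4$ states.
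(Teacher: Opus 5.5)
Your proof is correct, and it handles the nesting differently from the paper. The paper keeps the nested rules as stated. For each $r$ it evaluates the second coordinate $\mathcal{M}_{2}(8n+2r+\mathcal{M}_{2}(4n+r))$ by splitting on $y$ (which fixes the nested offset), re-expanding the nested rule at $8n+2r$, and then splitting again on $x$. It invokes Lemma~\ref{lem:balance-xor} explicitly only for the first coordinate of $s(4n+2)$.

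You instead apply Lemma~\ref{lem:balance-xor} at the outset, to the state and to both defining rules. Everything then reduces to the non-nested identities $\mathcal{M}_{2}(4k)=\mathcal{M}_{2}(2k)\oplus\mathcal{M}_{2}(k)$ and $\mathcal{M}_{2}(4k+2)=\mathcal{M}_{2}(2k+1)\oplus\mathcal{M}_{2}(k)$. The four values $\mathcal{M}_{2}(8n+2r)$ come out uniformly as $x$ or $x\oplus 1$ by $\mathbb{F}_2$-arithmetic, with no case analysis. The affine form of the transitions, which the paper only remarks on afterwards, is then explained by the proof itself.

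The paper's route shows concretely how the state resolves each nested index, but its many small case checks are error-prone. In case $r=2$, subcase $y=0$, it asserts $\mathcal{M}_{2}(8n+4)=\mathcal{M}_{2}(4n+3)=0$ for both values of $x$. This fails for $x=1$: at $n=1$ one has $\mathcal{M}_{2}(12)=1$ but $\mathcal{M}_{2}(7)=0$. The resulting transition is still correct, and your uniform identity $\mathcal{M}_{2}(8n+4)=x$ is the right one.

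Your explicit check of $s(0),\dots,s(3)$ for the $n=0$ case also supplies a detail the paper treats as immediate. So does your digit-by-digit induction for the DFAO conclusion.
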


\begin{proof}
 The case $n=0$ is immediate from the initial values $\mathcal{M}_{2}(0)=0$
 and $\mathcal{M}_{2}(1)=1$. We may therefore assume $n\ge 1$ whenever a rule for $4n$ or
 $4n+1$ is used.
 Write $s(n) = (x,y)$ so that $\mathcal{M}_{2}(2n) = x \oplus y$ and $\mathcal{M}_{2}(2n+1) = x \oplus y \oplus 1$.

\textbf{Case $r = 0$.}
 The first coordinate is given by $\mathcal{M}_{2}(4n) = \mathcal{M}_{2}(2n + x)$. By Lemma~\ref{lem:M2-recover}, we have that
 $\mathcal{M}_{2}(2n) = x \oplus y$ and $\mathcal{M}_{2}(2n+1) = x \oplus y \oplus 1$. In both cases, for $x = 0$ or $x = 1$, a direct 
 check gives that $\mathcal{M}_{2}(2n + x) = y$. For the second coordinate $\mathcal{M}_{2}(8n + y)$, we see that if $y = 0$, then 
 $\mathcal{M}_{2}(8n) = \mathcal{M}_{2}(4(2n)) = \mathcal{M}_{2}(2 \cdot 2n + \mathcal{M}_{2}(2n))$, and, since $\mathcal{M}_{2}(2n) = x 
 \oplus y = x$, this equals $\mathcal{M}_{2}(4n + x)$. Simplifying both cases gives~$x$. If $y = 1$, then $\mathcal{M}_{2}(8n+1) = 1 
 - \mathcal{M}_{2}(8n)$, and, by the same analysis as before, we get $x \oplus 1$. In both sub-cases the second coordinate equals 
 $x \oplus y$. Therefore, we obtain 
$$ s(4n) = (y,\; x \oplus y). $$

\textbf{Case $r = 1$.} The first coordinate is given by $\mathcal{M}_{2}(4n+1) = 1 - \mathcal{M}_{2}(4n) = 1 - y = y \oplus 1$.

 For the second coordinate of $s(4n+1)$, by definition, this is $\mathcal{M}_{2}(2(4n+1) + \mathcal{M}_{2}(4n+1)) = 
 \mathcal{M}_{2}(8n + 2 + (y \oplus 1))$. When $y = 0$, we have $\mathcal{M}_{2}(8n+3) = \mathcal{M}_{2}(4(2n)+3) = 1 - 
 \mathcal{M}_{2}(2(2n)+1-\mathcal{M}_{2}(2n))$. Since $\mathcal{M}_{2}(2n) = x$, this equals $1 - \mathcal{M}_{2}(4 n + 1 - 
 x) = x \oplus y$ for both values of~$x$. When $y = 1$, we have $\mathcal{M}_{2}(8n+2) = \mathcal{M}_{2}(2(2n) + 1 - 
 \mathcal{M}_{2}(2n))$. Since $\mathcal{M}_{2}(2n) = x \oplus 1$, this equals $\mathcal{M}_{2}(4n+x) = x \oplus y$ for both values of 
 $x$. The second coordinate is therefore $x \oplus y$ in both cases. Therefore
\[
 s(4n+1) = (y \oplus 1,\; x \oplus y).
\]

\textbf{Case $r = 2$.} For the first coordinate $\mathcal{M}_{2}(4n+2) = \mathcal{M}_{2}(2n + 1 - x)$, Lemma~\ref{lem:balance-xor} gives $\mathcal{M}_{2}(2n+1) \oplus x = (x \oplus y \oplus 1) \oplus x = y \oplus 1$.

 For the second coordinate of $s(4n+2)$, we find that $\mathcal{M}_{2}(2(4n+2)+\mathcal{M}_{2}(4n+2)) = \mathcal{M}_{2}(8 n + 4 + 
 (y\oplus 1))$. Since $\mathcal{M}_{2}(2n+1) = x \oplus 1$, we have $\mathcal{M}_{2}(8 n + 4) = \mathcal{M}_{2}(4n+3) = 1-(y \oplus 
 1)$ for both values of~$x$. When $y=0$, this gives $\mathcal{M}_{2}(8n+4)=0$, so $\mathcal{M}_{2}(8n+5)=1=x\oplus y\oplus 1$.
When $y=1$, since $\mathcal{M}_{2}(2n+1)=x$, we get $\mathcal{M}_{2}(8n+4) = \mathcal{M}_{2}(4n+2+x)=x\oplus y\oplus 1$ by a 
 direct case check. In both cases the second coordinate is $x \oplus y \oplus 1$. It then follows that 
\[
 s(4n+2) = (y \oplus 1,\; x \oplus y \oplus 1).
\]

\textbf{Case $r = 3$.}
 The first coordinate is given by $\mathcal{M}_{2}(4n+3) = 1 - \mathcal{M}_{2}(4n+2) = 1 - (y\oplus 1) = y$.

 For the second coordinate of $s(4n+3)$, we have that $\mathcal{M}_{2}(2(4n+3)+\mathcal{M}_{2}(4n+3)) = \mathcal{M}_{2}(8 n + 6 + 
 y)$. Since $\mathcal{M}_{2}(2n+1) = x \oplus 1$, the identity $\mathcal{M}_{2}(8n+6) = \mathcal{M}_{2}(2(2n+1) + 1 - 
 \mathcal{M}_{2}(2n+1))$ gives $\mathcal{M}_{2}(8n+6) = x \oplus y \oplus 1$ by a direct two-case check on~$x$; this handles $y = 
 0$. For $y=1$, since $\mathcal{M}_{2}(2n+1)=x$, the same formula yields $1-\mathcal{M}_{2}(4n+2+x) = x\oplus y\oplus 1$. The
 second coordinate is $x \oplus y \oplus 1$ in both cases. Consequently, we have that 
$$ s(4n+3) = (y,\; x \oplus y \oplus 1).
$$

Each transition has the form $(x,y) \mapsto A\binom{x}{y} + \mathbf{v}(d)$ over $\mathbb{F}_2$, confirming the affine structure.
\end{proof}

\begin{remark}
 The linear part $(x,y)\mapsto (y,x\oplus y)$ is given by multiplication by the matrix
\[A=\begin{pmatrix}0&1\\1&1\end{pmatrix}\in\mathrm{GL}_2(\mathbb{F}_2),\]
which has order $3$ (indeed $A^3=I$). The translation vectors are
$\mathbf{v}(0) = \binom{0}{0}$,
$\mathbf{v}(1) = \binom{1}{0}$,
$\mathbf{v}(2) = \binom{1}{1}$,
$\mathbf{v}(3) = \binom{0}{1}$.
\end{remark}

\begin{theorem}\label{thm:M2-dfao} 
 The sequence $\mathcal{M}_{2}$ is computed (MSB-first, base $4$) by the DFAO with states $\{0, 1, 2, 3\}$, initial state $0$, output 
 $\tau(0) = \tau(2) = 0$, $\tau(1) = \tau(3) = 1$, and transition table
\begin{center}
\normalfont
\begin{tabular}{c|cccc}
$\delta$ & 0 & 1 & 2 & 3 \\ \hline
0 & 0 & 1 & 3 & 2 \\
1 & 2 & 3 & 1 & 0 \\
2 & 3 & 2 & 0 & 1 \\
3 & 1 & 0 & 2 & 3 \\
\end{tabular}
\end{center}
 where the encoding is $\mathsf{0} = (0,0)$, $\mathsf{1} = (1,0)$, $\mathsf{2} = (0,1)$, $\mathsf{3} = (1,1)$.
\end{theorem}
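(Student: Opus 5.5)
The plan is to read the DFAO directly off the affine transition rules of Theorem~\ref{thm:M2-affine}, in the same way Theorem~\ref{thm:M1-dfao} was obtained from Lemma~\ref{lem:M1-closure}. The states are the pairs $s(n)=(x,y)\in\mathbb{F}_2^2$ from \eqref{eq:M2-state}, encoded as $\mathsf{0}=(0,0)$, $\mathsf{1}=(1,0)$, $\mathsf{2}=(0,1)$, $\mathsf{3}=(1,1)$. The output map $\tau$ returns the first coordinate $x=\mathcal{M}_{2}(n)$, which gives $\tau(\mathsf{0})=\tau(\mathsf{2})=0$ and $\tau(\mathsf{1})=\tau(\mathsf{3})=1$.

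\emph{Step 1: the transition table.} Define $\delta((x,y),r)$ to be the right-hand side of the rule for $s(4n+r)$ in Theorem~\ref{thm:M2-affine}, and translate each of the sixteen entries through the encoding. For example, digit $0$ acts by $(x,y)\mapsto(y,x\oplus y)$. This sends $(0,0)\mapsto(0,0)$, $(1,0)\mapsto(0,1)$, $(0,1)\mapsto(1,1)$ and $(1,1)\mapsto(1,0)$, which is exactly the column $0,2,3,1$. In the same way:
\begin{itemize}
\item digit $1$, acting by $(y\oplus1,x\oplus y)$, gives $1,3,2,0$;
\item digit $2$, acting by $(y\oplus1,x\oplus y\oplus1)$, gives $3,1,0,2$;
\item digit $3$, acting by $(y,x\oplus y\oplus1)$, gives $2,0,1,3$.
\end{itemize}
These columns agree with the stated table.

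\emph{Step 2: the initial state.} Since $\mathcal{M}_{2}(0)=0$, we get $s(0)=(\mathcal{M}_{2}(0),\mathcal{M}_{2}(0))=(0,0)=\mathsf{0}$. Also $\delta(\mathsf{0},0)=\mathsf{0}$, so leading zeros in the base-$4$ input do not change the state.

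\emph{Step 3: induction on the number of base-$4$ digits.} The claim is that reading the MSB-first representation of $n$ from state $\mathsf{0}$ ends in state $s(n)$. Writing $m=4n+r$ with $r\in\{0,1,2,3\}$, Theorem~\ref{thm:M2-affine} gives $s(4n+r)=\delta(s(n),r)$, so the induction step follows immediately. Applying $\tau$ to the final state then yields $\mathcal{M}_{2}(m)$.

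The only delicate point is the base case $n=0$. Here the recurrences for $4n$ and $4n+1$ are not available, since they are stated only for $n\ge1$. Theorem~\ref{thm:M2-affine} asserts this case without computing it, so I would check $s(1)$, $s(2)$, $s(3)$ directly from the initial values:
\begin{itemize}
\item $\mathcal{M}_{2}(2)=\mathcal{M}_{2}(1)=1$ and $\mathcal{M}_{2}(3)=0$, so $s(1)=(1,\mathcal{M}_{2}(3))=(1,0)=\mathsf{1}=\delta(\mathsf{0},1)$;
\item $\mathcal{M}_{2}(4)=\mathcal{M}_{2}(3)=0$ and $\mathcal{M}_{2}(5)=1$, so $s(2)=(1,\mathcal{M}_{2}(5))=(1,1)=\mathsf{3}=\delta(\mathsf{0},2)$;
\item $\mathcal{M}_{2}(6)=\mathcal{M}_{2}(2)=1$, so $s(3)=(0,\mathcal{M}_{2}(6))=(0,1)=\mathsf{2}=\delta(\mathsf{0},3)$.
\end{itemize}
With these checks in place the argument is purely mechanical.
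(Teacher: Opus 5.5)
Your proposal is correct and follows the paper's proof, which reads the transition table off the affine rules of Theorem~\ref{thm:M2-affine} under the stated encoding and notes that the initial state $s(0)=(0,0)$ is state~$\mathsf{0}$. Your column-by-column check of the table and your direct computation of $s(1),s(2),s(3)$ against $\delta(\mathsf{0},r)$ spell out details the paper leaves implicit, including the $n=0$ case that Theorem~\ref{thm:M2-affine} calls immediate.
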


\begin{proof}
 This follows directly from the affine transitions in Theorem~\ref{thm:M2-affine} applied to each encoded state. The initial state is $s(0) 
 = (0,0)$, corresponding to state~$\mathsf{0}$.
\end{proof}

\begin{figure}[ht]
\centering
\begin{tikzpicture}[->, >=stealth', shorten >=1pt, semithick,
 every state/.style={minimum size=1.1cm, font=\small},
 node distance=2.5cm, initial text={}]
 \node[state,initial] (T0) {$0/0$};
 \node[state] (T1) [right of=T0] {$1/1$};
 \node[state] (T2) [right of=T1] {$2/0$};
 \node[state] (T3) [right of=T2] {$3/1$};
 \path
 (T0) edge[loop above] node[above] {$0$} (T0)
 (T1) edge[loop above] node[above] {$2$} (T1)
 (T2) edge[loop above] node[above] {$1$} (T2)
 (T3) edge[loop below] node[below] {$3$} (T3)
 (T0) edge[bend left=25] node[above] {$1$} (T1)
 (T1) edge[bend left=25] node[above] {$0$} (T2)
 (T2) edge[bend left=25] node[above] {$0$} (T3)
 (T1) edge[bend left=25] node[above] {$3$} (T0)
 (T2) edge[bend left=25] node[above] {$3$} (T1)
 (T3) edge[bend left=25] node[above] {$2$} (T2)
 (T0) edge[bend left=65] node[above] {$3$} (T2)
 (T1) edge[bend left=65] node[above] {$1$} (T3)
 (T2) edge[bend left=35] node[below] {$2$} (T0)
 (T3) edge[bend left=35] node[below] {$0$} (T1)
 (T3) edge[bend left=50] node[below] {$1$} (T0)
 (T0) edge[bend right=75] node[below] {$2$} (T3);
\end{tikzpicture}
\caption{DFAO for $\mathcal{M}_{2}$ (4 states, base 4, MSB-first). State labels are \emph{state}/\emph{output}. The initial 
 state is $\mathsf{0}$.} 
\end{figure}

\begin{corollary}\label{cor:M2-morphism} 
The coding $\tau\colon \mathsf{0},\mathsf{2} \mapsto 0$ and $\mathsf{1},\mathsf{3} \mapsto 1$ applied 
 to the fixed point of the $4$-uniform morphism
\[
 \mathsf{0} \to \mathsf{0132}, \quad
 \mathsf{1} \to \mathsf{2310}, \quad
 \mathsf{2} \to \mathsf{3201}, \quad
 \mathsf{3} \to \mathsf{1023}
\]
starting from $\mathsf{0}$ produces the sequence $\mathcal{M}_{2}$.
\end{corollary}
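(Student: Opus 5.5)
The plan is to argue exactly as in Corollary~\ref{cor:M1-morphism}, via the standard correspondence between MSB-first $k$-DFAOs and $k$-uniform morphisms (Cobham's theorem, as in Allouche and Shallit's text). Define $\varphi(q)=\delta(q,0)\,\delta(q,1)\,\delta(q,2)\,\delta(q,3)$ for each state $q$, where $\delta$ is the transition table of Theorem~\ref{thm:M2-dfao}.

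\textbf{Step 1: read off the morphism.} Reading the table row by row gives:
\begin{itemize}
\item row $\mathsf{0}$: $0,1,3,2$, so $\mathsf{0}\mapsto\mathsf{0132}$;
\item row $\mathsf{1}$: $2,3,1,0$, so $\mathsf{1}\mapsto\mathsf{2310}$;
\item row $\mathsf{2}$: $3,2,0,1$, so $\mathsf{2}\mapsto\mathsf{3201}$;
\item row $\mathsf{3}$: $1,0,2,3$, so $\mathsf{3}\mapsto\mathsf{1023}$.
\end{itemize}
These agree with the stated morphism. Since $\delta(\mathsf{0},0)=\mathsf{0}$, the image $\varphi(\mathsf{0})$ begins with $\mathsf{0}$. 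Hence $\varphi^k(\mathsf{0})$ is a prefix of $\varphi^{k+1}(\mathsf{0})$, and the fixed point $\mathbf{u}=\varphi^\omega(\mathsf{0})$ exists.

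\textbf{Step 2: identify the fixed point.} I will show by induction on the number of base-$4$ digits that $\mathbf{u}(n)=\delta^*(\mathsf{0},(n)_4)$, the state reached after reading the MSB-first expansion of $n$. The base case is $\mathbf{u}(0)=\mathsf{0}$. For the inductive step, the fixed-point property gives $\mathbf{u}(4n+r)=\varphi(\mathbf{u}(n))_r=\delta(\mathbf{u}(n),r)$. Reading $(4n+r)_4$ means reading $(n)_4$ and then the digit $r$. The only subtlety is $n=0$, where $(r)_4$ has no leading zero; this is harmless because $\delta(\mathsf{0},0)=\mathsf{0}$ makes leading zeros irrelevant.

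\textbf{Step 3: apply the coding.} By Theorem~\ref{thm:M2-dfao}, $\tau(\delta^*(\mathsf{0},(n)_4))=\mathcal{M}_2(n)$, so $\tau(\mathbf{u}(n))=\mathcal{M}_2(n)$, which proves the corollary.

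I expect no real obstacle; the only points to check are the leading-zero issue and the transcription of the table rows.
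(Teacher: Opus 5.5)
Your proposal is correct and follows the paper's route. The paper gives no separate argument for this corollary; by analogy with Corollary~\ref{cor:M1-morphism}, it simply reads each image $\delta(q,0)\,\delta(q,1)\,\delta(q,2)\,\delta(q,3)$ off the transition table of Theorem~\ref{thm:M2-dfao}, and your row transcription is accurate. Your Steps 2 and 3 spell out the standard DFAO--morphism correspondence that the paper leaves implicit, including the leading-zero point handled by $\delta(\mathsf{0},0)=\mathsf{0}$.
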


\begin{remark}
 The four letters of the alphabet correspond to the states $(x,y)\in\mathbb{F}_2^2$ of the affine automaton 
 (Theorem~\ref{thm:M2-affine}) via $$ \mathsf{0} = (0,0),\quad \mathsf{1} = (1,0),\quad \mathsf{2} = (0,1),\quad 
 \mathsf{3} = (1,1). $$
 With this identification, the image $\sigma_{\mathcal{M}_{2}}(\mathsf{r})$ lists the four
 successive states visited when the current state is $\mathsf{r}$ and the digits
 $d\in\{0,1,2,3\}$ are read in order, and the coding $\tau(\mathsf{r})$ is the first
 coordinate of the state. Under the relabeling $a=\mathsf{0}$, $b=\mathsf{2}$, $c=\mathsf{1}$,
 $d=\mathsf{3}$, the morphism reads $a\to acdb$, $b\to dbac$, $c\to bdca$, $d\to cabd$, with
 $\tau(a)=\tau(b)=0$ and $\tau(c)=\tau(d)=1$. This labeling is used in the right-special
 factor analysis below.
\end{remark}

 The defining recurrence of $\mathcal{M}_{2}$ is not directly an instance of Theorem~\ref{AScrit}, since the argument $2n + 
 \mathcal{M}_{2}(n)$ in the nested recurrence in \eqref{eq:M2-01} and 
 the argument $2n+1-\mathcal{M}_{2}(n)$ in \eqref{eq:M2-23} depend on the
 value $\mathcal{M}_{2}(n)$. This together with the sequence
 $\mathcal{M}_{2}(2n+\mathcal{M}_{2}(n))$ used in the pair~\eqref{eq:M2-state} yields
 the system of recurrences in Theorem \ref{thm:M2-affine}. 
 Furthermore,
 Lemma~\ref{lem:balance-xor} gives us that 
\[
 \mathcal{M}_{2}(4n) = \mathcal{M}_{2}(2n) \oplus \mathcal{M}_{2}(n), \qquad
 \mathcal{M}_{2}(4n+2) = \mathcal{M}_{2}(2n+1) \oplus \mathcal{M}_{2}(n).
\]
 The XOR identities are direct consequences of Lemma~\ref{lem:balance-xor}, since
 $\mathcal{M}_{2}$ is balanced. 

\subsection{An explicit digit formula}
Write the base-$4$ expansion $n=\sum_{k\ge 0} d_k 4^k$ with digits $d_k\in\{0,1,2,3\}$, and write $d_k=2j_k+i_k$ with $i_k,j_k\in\{0,1\}$.

\begin{theorem}\label{thm:M2-closed} 
 We have that 
$$
\mathcal{M}_{2}(n)=\bigoplus_{ k \equiv 0\ (\mathrm{mod}\ 3) } (i_k\oplus j_k)\ \oplus\ \bigoplus_{ 
 k \equiv 1\ (\mathrm{mod}\ 3)
 } j_k\ \oplus\ \bigoplus_{k\equiv 2\ (\mathrm{mod}\ 3)} i_k.
$$
\end{theorem}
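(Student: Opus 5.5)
The plan is to use the affine automaton of Theorem~\ref{thm:M2-affine}, read LSB-first by unrolling the MSB-first recursion. Write $s(n)=(x,y)$ as a column vector $\sigma(n)\in\mathbb{F}_2^2$. Theorem~\ref{thm:M2-affine} says $\sigma(4n+d)=A\sigma(n)+\mathbf{v}(d)$ for all $n\ge 0$ and $d\in\{0,1,2,3\}$. Here $A=\bigl(\begin{smallmatrix}0&1\\1&1\end{smallmatrix}\bigr)$, and $\mathbf{v}$ is as in the subsequent remark. Note that the $n=0$ case is covered there, since the initial values give $s(0)=(0,0)$, $s(1)=(1,0)$, $s(2)=(1,1)$, $s(3)=(0,1)$, which agree with the rules applied to $(0,0)$.

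Let $n=\sum_{k=0}^{L-1}d_k4^k$. Induction on $L$, peeling off the least significant digit $d_0$ and using $\sigma(0)=0$, gives
\[
\sigma(n)=\sum_{k=0}^{L-1}A^{k}\,\mathbf{v}(d_k).
\]
Leading zero digits contribute $\mathbf{v}(0)=0$, so the formula does not depend on $L$.

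Next I express $\mathbf{v}(d)$ linearly in $(i,j)$, where $d=2j+i$. From $\mathbf{v}(0)=(0,0)^T$, $\mathbf{v}(1)=(1,0)^T$, $\mathbf{v}(2)=(1,1)^T$, $\mathbf{v}(3)=(0,1)^T$, one checks $\mathbf{v}(d)=(i\oplus j,\ j)^T$. This is genuinely linear over $\mathbb{F}_2$: $d=3$ gives $(0,1)$, as required.

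Since $\mathcal{M}_2(n)$ is the first coordinate, I need the first row of $A^k$. Because $A^3=I$, this depends only on $k \bmod 3$. The first rows are $(1,0)$ for $A^0$, $(0,1)$ for $A^1$, and $(1,1)$ for $A^2=\bigl(\begin{smallmatrix}1&1\\1&0\end{smallmatrix}\bigr)$. Applying these to $(i_k\oplus j_k,\ j_k)^T$ gives:
\begin{itemize}
\item $i_k\oplus j_k$ when $k\equiv0 \pmod 3$;
\item $j_k$ when $k\equiv1 \pmod 3$;
\item $(i_k\oplus j_k)\oplus j_k=i_k$ when $k\equiv2 \pmod 3$.
\end{itemize}
Summing over $k$ gives exactly the stated formula.

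The only delicate point is the unrolling step. Theorem~\ref{thm:M2-affine} must hold for every $n\ge0$, including $n=0$, so that the recursion bottoms out cleanly at $\sigma(0)=0$. I therefore check the four initial pairs above explicitly before running the induction. The rest is routine linear algebra over $\mathbb{F}_2$.
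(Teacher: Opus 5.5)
Your proposal is correct and follows essentially the same route as the paper. Both arguments unroll $s(4n+d)=A\,s(n)+\mathbf{v}(d)$ from $s(0)=(0,0)$ to $s(n)=\sum_k A^k\mathbf{v}(d_k)$, use $A^3=I$, and read off the first coordinate. The only differences are cosmetic: you write $\mathbf{v}(d)=(i\oplus j,\,j)^T$ and apply the first rows of $A^0,A^1,A^2$ instead of tabulating every $A^k\mathbf{v}(d)$, and you explicitly check the $n=0$ base case and the least-significant-digit indexing of the unrolling, which the paper handles more tersely.
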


\begin{proof}
 Let $d_{L-1}\cdots d_1 d_0$ be the base-$4$ expansion of $n$. Since the automaton reads
 digits MSB-first and the transition has the form $s\mapsto As+\mathbf{v}(d)$, starting
 from $s(0)=(0,0)$ gives
\[
 s(n) = \sum_{k=0}^{L-1} A^{k}\, \mathbf{v}(d_k) \pmod{2}.
\]
 Since $A^3=I$, the exponent depends only on $k \bmod 3$. Computing $A^k\mathbf{v}(d)$ for
 $k\in\{0,1,2\}$ and each digit $d$, we obtain
\[
\begin{array}{c|ccc}
 d & A^0\mathbf{v}(d) & A^1\mathbf{v}(d) & A^2\mathbf{v}(d) \\ \hline
 0 & (0,0) & (0,0) & (0,0) \\
 1 & (1,0) & (0,1) & (1,1) \\
 2 & (1,1) & (1,0) & (0,1) \\
 3 & (0,1) & (1,1) & (1,0)
\end{array}
\]
Extracting the first coordinate and identifying $i_k = d_k \bmod 2$ and $j_k = \lfloor d_k/2 \rfloor$ yields the stated formula.
\end{proof}

\begin{theorem}\label{thm:M2-tq} 
Write the binary expansion $n=\sum_{m\ge 0} b_m 2^m$ with $b_m\in\{0,1\}$, and define
\begin{equation}\label{eq:q-mask} 
 q(n) := \sum_{\substack{m \ge 0 \\ m \not\equiv 2\ (\mathrm{mod}\ 3)}} b_m 2^m,
\end{equation}
 so that $q(n)$ is obtained from $n$ by zeroing the binary digits in positions
 $m\equiv 2\pmod 3$. Then
$$ \mathcal{M}_{2}(n)=\mathbf{t}(q(n))\qquad(n\ge 0). $$
Equivalently, if $n=\sum_{k\ge 0} d_k 4^k$ with $d_k=2j_k+i_k$, then $q(n)=\sum_{k\ge 0} d'_k 4^k$ where
\[
 d'_k= 
 \begin{cases}
 d_k, & \text{if } k\equiv 0\pmod 3;\\
 2j_k, & \text{if } k\equiv 1\pmod 3;\\
 i_k, & \text{otherwise.}
 \end{cases}
\]
and $\mathcal{M}_{2}(n)=\mathbf{t}(q(n))$.
\end{theorem}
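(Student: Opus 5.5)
The plan is to reduce the statement to Theorem~\ref{thm:M2-closed}, so that only a bookkeeping comparison of bit positions remains. The key observation is that the binary digits of $n$ are exactly the bits $i_k,j_k$ of its base-$4$ digits. Since $d_k=2j_k+i_k$ and $n=\sum_k d_k4^k=\sum_k (i_k 2^{2k}+j_k2^{2k+1})$, we have $b_{2k}=i_k$ and $b_{2k+1}=j_k$ for every $k\ge 0$.

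Next I will use $\mathbf{t}(m)=s_2(m)\bmod 2$. This gives
\[
\mathbf{t}(q(n))=\bigoplus_{m\not\equiv 2\ (\mathrm{mod}\ 3)} b_m ,
\]
because $q(n)$ simply retains the bits $b_m$ with $m\not\equiv 2\pmod 3$ and deletes the rest. I then group the surviving bits by the base-$4$ digit they belong to, that is, by the pair of positions $\{2k,2k+1\}$. Which bits survive depends on $k \bmod 3$, so I split into three cases.

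\emph{Case $k\equiv 0\pmod 3$.} Then $2k\equiv 0$ and $2k+1\equiv 1\pmod 3$, so both bits survive and contribute $i_k\oplus j_k$.

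\emph{Case $k\equiv 1\pmod 3$.} Then $2k\equiv 2$ and $2k+1\equiv 0\pmod 3$, so only $j_k$ survives.

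\emph{Case $k\equiv 2\pmod 3$.} Then $2k\equiv 1$ and $2k+1\equiv 2\pmod 3$, so only $i_k$ survives.

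Taking the XOR over all $k$ reproduces term-by-term the right-hand side of Theorem~\ref{thm:M2-closed}, and hence $\mathbf{t}(q(n))=\mathcal{M}_{2}(n)$.

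The same case analysis yields the equivalent digit description. In base $4$, the digit of $q(n)$ at position $k$ is $d'_k=2b'_{2k+1}+b'_{2k}$, where $b'$ denotes the masked bits. By the three cases this equals $d_k$, $2j_k$, or $i_k$ according as $k\equiv 0,1,2\pmod 3$. Since $q(n)$ has these base-$4$ digits, the second formulation is the same statement.

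There is no real obstacle here; the only point needing care is the parity of positions, namely that $i_k$ sits at the even binary position $2k$ and $j_k$ at $2k+1$. I would double-check this against a small example such as $n=4$, where $d_1=1$, so $i_1=1$ sits at position $2\equiv 2$ and is zeroed. This gives $q(4)=0$ and $\mathbf{t}(0)=0$. Independently, Theorem~\ref{thm:M2-closed} gives $\mathcal{M}_{2}(4)=j_1=0$, which agrees.
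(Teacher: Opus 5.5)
Your proposal is correct and is essentially the paper's proof: both identify $i_k=b_{2k}$ and $j_k=b_{2k+1}$, then match the surviving bits digit by digit, according to $k \bmod 3$, against the formula of Theorem~\ref{thm:M2-closed}. The only difference is direction: you start from the mask and compute which bits of each digit survive, which makes the paper's separate ``conversely, every $m\equiv 2 \pmod 3$ is omitted'' step automatic.
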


\begin{proof}
 In Theorem~\ref{thm:M2-closed}, each base-$4$ digit $d_k$ contributes to $\mathcal{M}_{2}(n)$ in a way that depends only on 
 $k\bmod 3$: For $k \equiv 0 \bmod 3$, we take $i_k \oplus j_k$, for $k\equiv 1 \bmod 3$, we take only $j_k$, and for $k \equiv 2 
 \bmod 3$, we take $i_k$. Since $i_k=b_{2k}$ and $j_k=b_{2k+1}$, the binary positions $m=2k$ and $m = 2 k + 1$ are kept or 
 omitted, according to the residue class $k \bmod 3$, in the following manner. For $k\equiv 0 \pmod 3$, both positions are kept. For 
 $k\equiv 1\pmod 3$, the position $m=2k$ is omitted. For $k \equiv 2 \pmod 3$, the position $m = 2k+1$ is omitted. 
 In each of the cases where a position is omitted, 
 the omitted position $m$ satisfies $m \equiv 2 \pmod 3$. Conversely, every value $m$ such that 
 $m\equiv 2\pmod 3$ falls into one of the two specified cases. Thus, the omitted positions are precisely those     such that $m \equiv 2  
  \pmod 3$, and $\mathcal{M}_{2}(n)$ is the parity of the binary digit-sum of 
 $q(n)$ defined in~\eqref{eq:q-mask}, and hence $\mathcal{M}_{2}(n)=\mathbf{t}(q(n))$. This gives us, in an equivalent 
 way, the desired 
 base-$4$ description of $q(n)$. 
\end{proof}

\section{Factor complexity}\label{sec:complexity}
 Let $p_a(n)$ denote the number of distinct length-$n$ factors in the infinite word $a(0)a(1)a(2)\cdots$. A length-$n$ factor $u$ of $a$ is 
 called \emph{right-special} if both
 $u0$ and $u1$ are factors of $a$. We write $\mathrm{RS}_a(n)$ in place of the set of right-special
 factors of $a$ of length $n$.

\begin{lemma}\label{lemcountRS} 
 For every infinite binary word $a$ and every $n\ge 1$, we have that $$ p_a(n+1) - p_a(n) = \#\operatorname{RS}_a(n). $$
\end{lemma}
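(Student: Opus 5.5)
The plan is to count length-$(n+1)$ factors by their length-$n$ prefixes. Write $\mathcal{L}_n$ for the set of length-$n$ factors of $a$, and consider the truncation map
\[
\pi\colon \mathcal{L}_{n+1}\to\mathcal{L}_n,\qquad u c\mapsto u \quad (c\in\{0,1\}),
\]
which deletes the last letter.

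First, $\pi$ is well defined: deleting the last letter of a factor gives a factor. Second, $\pi$ is surjective. This is where we use that $a$ is \emph{infinite} (one-sided, indexed by $\mathbb{N}_0$). If $u$ occurs at position $i$, then $u\,a(i+n)$ is a factor of length $n+1$, and $\pi$ sends it to $u$. So every $u\in\mathcal{L}_n$ has at least one right extension.

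Third, the fibre $\pi^{-1}(u)$ is a subset of $\{u0,u1\}$, because the alphabet is binary. By surjectivity it is nonempty, so it has either one or two elements. It has two elements exactly when both $u0$ and $u1$ are factors, that is, exactly when $u\in\mathrm{RS}_a(n)$.

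Summing the fibre sizes over $\mathcal{L}_n$ gives
\[
p_a(n+1)=\sum_{u\in\mathcal{L}_n}\#\pi^{-1}(u)=p_a(n)+\#\mathrm{RS}_a(n).
\]
This is the claimed identity.

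There is no real obstacle. The one point to check is surjectivity of $\pi$, and it holds because the word is infinite to the right, so every occurrence of $u$ can be extended. The identity would fail for a finite word whose suffix has no extension. The hypothesis $n\ge 1$ is not actually needed: with $n=0$ the empty word is right-special whenever both letters occur.
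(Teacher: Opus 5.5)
Your proposal is correct and is essentially the paper's argument: the paper writes $p_a(n+1)=\sum_u |E_R(u)|$ over length-$n$ factors $u$, which is exactly your fibre count for the truncation map $\pi$. You also make explicit that every factor has at least one right extension because the word is infinite, a point the paper uses tacitly when it asserts that $|E_R(u)|-1\in\{0,1\}$.
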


\begin{proof}
 For each length-$n$ factor $u$, let $E_R(u)\subseteq\{0,1\}$ be the set of right extensions such that $u\alpha$ occurs in $a$. Then 
 $p_a(n+1)=\sum_{u}|E_R(u)|$ and $p_a(n)=\sum_{u}1$, so $p_a(n+1)-p_a(n)=\sum_{u}(|E_R(u)|-1)$. In the binary case,
 $|E_R(u)|-1$ equals $1$ if $u$ is right-special and $0$ otherwise.
\end{proof}

\begin{lemma}\label{RScomputable} 
 If $a$ is $k$-automatic, then the function $n\mapsto \#\mathrm{RS}_a(n)$ is $k$-regular in the sense of 
 Allouche--Shallit~\cite{AlloucheShallit2003}, and is effectively computable
 from any DFAO for $a$. Moreover, if $\#\mathrm{RS}_a(n)$ is bounded, then it is $k$-automatic.
\end{lemma}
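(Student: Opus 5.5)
Our plan is to combine Lemma~\ref{lemcountRS} with the standard decidability machinery for automatic sequences: first-order logic over $(\mathbb{N},+,V_k)$, as in the Walnut framework. Let $a$ be $k$-automatic. The first step is to define the set of right-special factors by a first-order formula with free variables $i$ and $n$. A position $i$ begins a right-special factor of length $n$ if there exist $j_1, j_2$ such that $a[i..i+n-1]=a[j_1..j_1+n-1]=a[j_2..j_2+n-1]$ and $a(j_1+n)\neq a(j_2+n)$. Factor equality is expressed by $\forall t<n\ a(i+t)=a(j+t)$. To count each factor once, we add the condition that $i$ is the \emph{first} occurrence of its factor, namely $\forall i'<i$ the factor at $i'$ differs from the factor at $i$. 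Since $a$ is $k$-automatic, the Büchi--Bruyère theorem yields an effectively constructible automaton $\mathcal{B}$ accepting exactly the pairs $(i,n)$, in base $k$, satisfying this formula $\varphi(i,n)$. Then
\[
\#\mathrm{RS}_a(n)=\#\{i : \varphi(i,n)\}.
\]
This count is finite for each $n$, because there are at most $2^n$ binary factors of length $n$ and first occurrences are unique.

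The second step is the key one: the number of accepting paths of a synchronized automaton, with $n$ fixed, is $k$-regular. We construct a linear representation $(u,\mu,w)$ from $\mathcal{B}$. Here $\mu(d)$ is the matrix whose $(p,q)$ entry counts the digits $e$ with $\delta_{\mathcal{B}}(p,[d,e])=q$. Padding is handled by reading both components with the same length, using leading zeros. Taking $u$ as the indicator of the initial state and $w$ as the indicator of final states gives $\#\{i:\varphi(i,n)\}=u\,\mu(d_{L-1})\cdots\mu(d_0)\,w$, where the $d$'s are the base-$k$ digits of $n$. One subtlety needs care here. The representation of $i$ may be longer than that of $n$, so we must allow leading zeros in $n$. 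We handle this by making the automaton accept with arbitrary leading $[0,*]$ blocks, closing the final-state vector under reading $[0,e]$ prefixes. Equivalently, we replace $w$ (MSB-first, read as initial) by the vector obtained from the $0$-padded closure. Because the count is finite, only boundedly many extra leading digits can matter, and Cobham-style arguments (cf.\ \cite{AlloucheShallit2003}) show that this closure is computable. This yields $k$-regularity and effective computability of the representation.

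The final step covers the bounded case. A $k$-regular sequence taking only finitely many values is $k$-automatic; this is a theorem of Allouche--Shallit \cite{AlloucheShallit2003}, and we invoke it directly. The main obstacle is the leading-zero and padding issue in the path-counting step, together with ensuring that first-occurrence uniqueness makes the path count equal the factor count. We would follow the treatment of Charlier--Rampersad--Shallit for counting via linear representations.
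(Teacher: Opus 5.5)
Your proposal is correct and follows the same route as the paper: first-order definability over $(\mathbb{N},+,V_k)$, a path-counting linear representation giving $k$-regularity, and Allouche--Shallit's Theorem~16.1.5 for the bounded case, with your first-occurrence condition and explicit matrices $\mu(d)$ supplying details the paper leaves to citation. One slip is that, reading MSB-first, the extra leading $[0,e]$ symbols are consumed at the start, so they must be absorbed into the initial vector (replace $u$ by $u\,\mu(0)^m$), not into $w$; the uniform bound you assert then holds because, on states from which acceptance is reachable, the entries of $u\,\mu(0)^m$ are bounded nonnegative integers (by finiteness of the count) and hence eventually periodic in $m$, while each $u\,\mu(0)^m\mu(x)w$ is nondecreasing in $m$, so all of them become constant from one common $m$ onward.
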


\begin{proof}
 The predicate ``$u$ is a length-$n$ factor of $a$'' and the predicate ``$u$ is right-special'' are first-order definable over $(\mathbb{N}, 
 +,V_k)$ relative to a DFAO for $a$, so the set of encodings of such pairs is regular. Counting right-special factors of a given length 
 reduces to a finite-automaton computation, giving a $k$-regular sequence
 \cite[Ch.~16]{AlloucheShallit2003}. A bounded $k$-regular sequence
 is $k$-automatic \cite[Theorem~16.1.5]{AlloucheShallit2003}.
\end{proof}

 Several statements in this paper can be certified by the {\tt Walnut} theorem prover~\cite{Shallit2022}, which decides first-order 
 properties of automatic sequences in Presburger arithmetic. This applies to the defining recurrences and the balancedness
 of $\mathcal{Q}$, $\mathcal{M}_{1}$, and $\mathcal{M}_{2}$ (Sections~\ref{sec:meta1}
 and~\ref{sec:meta2}), and (using Lemma \ref{RScomputable}) to the right-special factor structure of $\mathcal{Q}$ and
 $\mathcal{M}_{2}$ in the present section. The right-special factor verifications for
 $\mathcal{M}_{2}$ use the CCL(S) determinization algorithm of Nicol and Frohme~\cite{NicolFrohme2026}. We give arguments 
 that do not require Walnut wherever
 this is convenient.

\subsection{\texorpdfstring{Factor complexity of $\mathcal{Q}$}{Factor complexity of Q}}
 We first state a property of the factors of $\mathbf{t}$.

\begin{lemma}\label{lem:tm-complement} 
 The set $\mathrm{Fac}(\mathbf{t})$ is closed under bitwise complementation: If $w \in \mathrm{Fac}(\mathbf{t})$, then 
 $\bar{w} \in \mathrm{Fac}(\mathbf{t})$.
\end{lemma}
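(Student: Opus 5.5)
The plan is to exhibit, for every occurrence of $w$ in $\mathbf{t}$, an occurrence of $\bar w$, using only the defining recurrence $\mathbf{t}(2n)=\mathbf{t}(n)$, $\mathbf{t}(2n+1)=1-\mathbf{t}(n)$. The key fact is that $\mathbf{t}$ is the fixed point of $\mu\colon 0\mapsto 01,\ 1\mapsto 10$. Concretely, the prefix of length $2^k$ is $\mu^k(0)$ and the next block of length $2^k$ is $\mu^k(1)=\overline{\mu^k(0)}$. In index form, $\mathbf{t}(2^k+j)=1\oplus\mathbf{t}(j)$ for $0\le j<2^k$, because adding $2^k$ to $j<2^k$ turns on exactly one extra binary digit, so $s_2(2^k+j)=s_2(j)+1$. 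This is immediate from $\mathbf{t}(n)=s_2(n)\bmod 2$.

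Now let $w=\mathbf{t}(i)\mathbf{t}(i+1)\cdots\mathbf{t}(i+\ell-1)$ be a factor occurring at position $i$. I will choose $k$ with $2^k\ge i+\ell$, so that the whole occurrence lies inside the prefix $\mathbf{t}(0)\cdots\mathbf{t}(2^k-1)$. Then for each $0\le r<\ell$ we have
\[
\mathbf{t}(2^k+i+r)=1\oplus\mathbf{t}(i+r),
\]
since $i+r<2^k$. Hence the factor of length $\ell$ starting at position $2^k+i$ is exactly $\bar w$, and so $\bar w\in\mathrm{Fac}(\mathbf{t})$.

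No step here is a real obstacle. The only point needing care is choosing $k$ so that no carry occurs when $2^k$ is added; this is guaranteed by the condition $i+\ell\le 2^k$, which keeps every index $i+r$ below $2^k$. As an alternative argument, I could note that $\mathbf{t}$ is uniformly recurrent, so every factor occurs in some $\mu^k(0)$, and then use the identity $\mu^k(1)=\overline{\mu^k(0)}$. The direct index computation above avoids needing recurrence, so I would present that version.
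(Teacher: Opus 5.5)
Your proof is correct, and it takes a different route from the paper's in the step that locates $\bar w$. The paper argues through the morphism. Since $\overline{\mu(w)}=\mu(\bar w)$, complementing a factor of $\mu^\omega(0)$ gives a factor of $\mu^\omega(1)=\bar{\mathbf{t}}$. It then concludes using the identity $\mu^\omega(1)=1\,\mathbf{t}[1\,\infty)$.

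That identity is false: $\bar{\mathbf{t}}$ begins $1001\cdots$, while $1\,\mathbf{t}[1\,\infty)$ begins $1110\cdots$. So the paper's last step rests only on the claim that every factor of $\bar{\mathbf{t}}$ is a factor of $\mathbf{t}$, which is the lemma itself. The missing fact is exactly what you prove: the block $\mu^k(1)=\overline{\mu^k(0)}$ actually occurs in $\mathbf{t}$. It sits at position $2^k$, and more generally wherever a $1$ occurs, since $\mathbf{t}=\mu^k(\mathbf{t})$.

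Your identity $\mathbf{t}(2^k+j)=1\oplus\mathbf{t}(j)$ for $0\le j<2^k$ proves this directly from the digit-sum description. It also gives an explicit occurrence of $\bar w$ at position $2^k+i$, with no appeal to morphisms or recurrence.

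Once repaired, the morphic argument has the advantage of generality. It shows the language of any fixed point $\sigma^\omega(a)$ is closed under a letter permutation $\pi$ commuting with $\sigma$, provided $\pi(a)$ occurs in the fixed point. Your argument is tied to $\mathbf{t}(n)=s_2(n)\bmod 2$, but it is self-contained, fully explicit, and has no gap.
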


\begin{proof}
 The Thue--Morse morphism $\mu: 0 \mapsto 01,\; 1 \mapsto 10$ satisfies $\overline{\mu(w)} = \mu(\bar{w})$ for every binary word 
 $w$. Since $\mathbf{t} = \mu^\omega(0)$ and both $0$ and $1 = \bar{0}$ appear in $\mathbf{t}$, every 
 factor $w$ of $\mu^\omega(0)$ yields $\bar{w} = \overline{\mu^\omega(0)}|_{\text{same position}} = 
 \mu^\omega(1)|_{\text{same position}}$, which is a factor of $\mu^\omega(1) = 1\, 
 \mathbf{t}[1\,\infty)$, hence a factor of $\mathbf{t}$.
\end{proof}

\begin{proposition}\label{prop:Q-exact} 
 Let $J_n$ denote the set of length-$n$ factors of $\mathcal{Q}$ that are not factors of $\mathbf{t}$. Then $$
 p_{\mathcal{Q}}(n) = p_{\mathbf{t}}(n) + |J_n|. $$
\end{proposition}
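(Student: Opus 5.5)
The identity is a counting statement. By definition, $J_n$ consists of exactly those length-$n$ factors of $\mathcal{Q}$ that are not factors of $\mathbf{t}$. It will therefore suffice to show that every length-$n$ factor of $\mathbf{t}$ is also a factor of $\mathcal{Q}$. Once that inclusion $\mathrm{Fac}_n(\mathbf{t}) \subseteq \mathrm{Fac}_n(\mathcal{Q})$ is established, the set of length-$n$ factors of $\mathcal{Q}$ splits as the disjoint union of $\mathrm{Fac}_n(\mathbf{t})$ and $J_n$, and taking cardinalities gives $p_{\mathcal{Q}}(n) = p_{\mathbf{t}}(n) + |J_n|$.

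\textbf{Step 1: reduce to long Thue--Morse blocks inside $\mathcal{Q}$.} I will use Proposition~\ref{prop:Q-rep}, which gives $\mathcal{Q}(m) = \mathbf{t}(m) \oplus d(m)$. The function $d$ is constant on each dyadic block $[2^{L}, 2^{L+1})$, where it equals $L \bmod 2$. Hence on a block with $L$ even, $\mathcal{Q}$ coincides with $\mathbf{t}$, and on a block with $L$ odd, $\mathcal{Q}$ coincides with $\bar{\mathbf{t}}$.

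\textbf{Step 2: locate an arbitrary factor of $\mathbf{t}$ in a long block.} Fix a length-$n$ factor $w$ of $\mathbf{t}$. By Lemma~\ref{lem:tm-complement}, $\bar w$ is also a factor of $\mathbf{t}$. I then want to find $w$ (or $\bar w$, as appropriate) inside $\mathbf{t}$ restricted to a single block $[2^L, 2^{L+1})$ with $L$ large and of prescribed parity. For this, use $\mathbf{t}(2^L + i) = 1 \oplus \mathbf{t}(i)$ for $0 \le i < 2^L$. Thus $\mathbf{t}$ restricted to $[2^L, 2^{L+1})$ is the complement of the prefix $\mathbf{t}[0,2^L)$. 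Since $w$ occurs in $\mathbf{t}$, it occurs in some prefix $\mathbf{t}[0,2^L)$ for all sufficiently large $L$.

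\textbf{Step 3: combine the two complementations.} On the block $[2^L, 2^{L+1})$, $\mathcal{Q}$ equals the complement of the prefix $\mathbf{t}[0,2^L)$, complemented a second time exactly when $L$ is odd. So for odd $L$, $\mathcal{Q}$ restricted to that block is precisely $\mathbf{t}[0,2^L)$, and this prefix contains $w$ once $L$ is large. Therefore $w$ is a factor of $\mathcal{Q}$, which proves the inclusion. This route does not even need the complement-closure of Lemma~\ref{lem:tm-complement}, though that lemma gives an alternative via even $L$.

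The main obstacle is only bookkeeping: checking that the parity conventions in $d$ and in the identity $\mathbf{t}(2^L + i) = 1 \oplus \mathbf{t}(i)$ align, and noting that the occurrence of $w$ lies entirely inside one block, since it sits within the first $2^L$ letters of the block.
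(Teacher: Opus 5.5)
Your argument is correct, and it takes a more direct route than the paper. You reduce the identity to the single inclusion $\mathrm{Fac}_n(\mathbf{t})\subseteq\mathrm{Fac}_n(\mathcal{Q})$; everything else is the tautological splitting of $\mathrm{Fac}_n(\mathcal{Q})$ into Thue--Morse factors and $J_n$. You then prove that inclusion with the block identity $\mathcal{Q}(2^L+i)=\mathbf{t}(i)$ for $0\le i<2^L$ and $L$ odd. This identity follows from Proposition~\ref{prop:Q-rep} together with $\mathbf{t}(2^L+i)=1\oplus\mathbf{t}(i)$, so any occurrence of $w$ in $\mathbf{t}[0,2^L)$ reappears inside a single block of $\mathcal{Q}$.

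The paper's proof puts its effort into the opposite direction. It uses Proposition~\ref{prop:Q-rep} and Lemma~\ref{lem:tm-complement} to show that a factor of $\mathcal{Q}$ lying inside one dyadic block $[2^k,2^{k+1})$ is a factor of $\mathbf{t}$, so elements of $J_n$ must straddle a boundary $2^k$. It then concludes with the sentence ``every factor of $\mathbf{t}$ appears in $\mathcal{Q}$'' without further justification.

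The paper's route gives extra structural information about where the new factors in $J_n$ can occur, but that is not needed for the counting identity. Your route gives an explicit proof of exactly the inclusion the identity depends on, which the paper leaves unargued, and it does not need complement-closure of $\mathrm{Fac}(\mathbf{t})$ at all.
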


\begin{proof}
 By Proposition~\ref{prop:Q-rep}, on each interval $[2^k, 2^{k+1})$, the sequence $\mathcal{Q}$ coincides with $\mathbf{t}$ or 
 $1-\mathbf{t}$, according to the parity of $k$. The cases give us that a factor of $\mathcal{Q}$ of length $n$ is either a factor of 
 $\mathbf{t}$ or the complement of a factor of $\text{{\bf t}}$, which, by Lemma~\ref{lem:tm-complement}, is again a factor of 
 $\mathbf{t}$. In any case, such factors are in $\mathrm{Fac}(\mathbf{t})$.

 For an interval of the form $[2^k, 2^{k+1})$, suppose that the indices associated with a factor of length $n$ properly contains this 
 interval. This can only occur when its left endpoint is at a position $<2^k$ and its right endpoint is at a position $\ge 2^{k+1}$ for some 
 $k$, which forces $n\ge 2^k$. In every case, such a factor has the form $u_0\bar{u}_1 u_2\bar{u}_3\cdots$ obtained by concatenating 
 consecutive pieces of $\mathbf{t}$ with alternating bitwise complementation, where the concatenation $u_0 u_1 u_2 u_3\cdots$ is a 
 factor of $\mathbf{t}$. By Lemma~\ref{lem:tm-complement}, the complementation pattern preserves membership in 
 $\mathrm{Fac}(\mathbf{t})$ for the simplest one-boundary case $u_0 \bar{u}_1$ if and only if $\bar{u}_0 u_1 \in 
 \mathrm{Fac}(\mathbf{t})$, and this need not hold in general. According to the above definition of $J_n$, we have that $J_n$ as 
 the set of length-$n$ factors that have at least one index strictly less than $ 2^k$ and at least one index greater than or equal to
 $2^{k}$ and that are not in $\mathrm{Fac}(\mathbf{t})$.

 Since every factor of $\mathbf{t}$ appears in $\mathcal{Q}$, the set of factors of $\mathcal{Q}$ of length $n$ is exactly 
 $\mathrm{Fac}(\mathbf{t})\cap\{0,1\}^n \cup J_n$, giving us that $p_{\mathcal{Q}}(n) = p_{\mathbf{t}}(n) + |J_n|$.
\end{proof}

\begin{theorem}
 For all $k \geq 4$, we have that $$ p_{\mathcal{Q}}(2^k) = 13 \cdot 2^{k-2} - 2. $$
Equivalently, $p_{\mathcal{Q}}(2^k) = p_{\mathbf{t}}(2^k) + 2^{k-2}$, so $|J_{2^k}| = 2^{k-2}$.
\end{theorem}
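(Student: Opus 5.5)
The plan is to reduce the statement to counting the exceptional set $J_{2^k}$ from Proposition~\ref{prop:Q-exact}, using the known Thue--Morse value $p_{\mathbf{t}}(2^k)=3\cdot 2^k-2$. This value follows from the classical formula for $p_{\mathbf{t}}$ (Brlek; de Luca--Varricchio), and I would cite it or recheck it quickly with Lemma~\ref{lemcountRS}. Since $13\cdot 2^{k-2}-2=(3\cdot 2^k-2)+2^{k-2}$, the theorem is equivalent to $|J_{2^k}|=2^{k-2}$. Throughout, write $n=2^k$ and use Proposition~\ref{prop:Q-rep} in the form $\mathcal{Q}=\mathbf{t}\oplus d$, with $d$ constant on each dyadic block $[2^j,2^{j+1})$.

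The first step shows that windows far from the origin contribute nothing to $J_n$. Take a window $[p,p+n)$ that meets a boundary $2^j$ with $j\ge k$.
\begin{itemize}
\item Both adjacent blocks have length $\ge n$, so the window crosses only this one boundary.
\item Around $2^j$, the word $\mathbf{t}$ reads $\mu^k(x)\mu^k(1)$ on $[2^j-2^k,2^j+2^k)$, with $x=\mathbf{t}(2^{j-k}-1)$, because $\mathbf{t}(2^j+i)=1\oplus\mathbf{t}(i)$.
\item Passing to $\mathcal{Q}$ complements one side relative to the other. Up to a global complement, $\mathcal{Q}$ therefore reads $\mu^k(\bar x)\mu^k(1)$ there.
\item Every two-letter word occurs in $\mathbf{t}$, and $\mu^k$ applied to an occurrence gives an aligned occurrence. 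So $\mu^k(\bar x)\mu^k(1)$ is a factor of $\mathbf{t}$, and so is each of its length-$n$ subwords.
\item By Lemma~\ref{lem:tm-complement}, the complemented versions are also factors of $\mathbf{t}$.
\end{itemize}
Windows inside a single block are handled by the first paragraph of the proof of Proposition~\ref{prop:Q-exact}. Hence every element of $J_n$ occurs at a starting position $p$ with $p<2^k$, and its window meets some boundary $2^j$ with $j<k$.

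The second step is a finite but parametrized analysis of these initial windows $p\in[0,2^k)$. The window $[p,p+n)$ sits inside $[0,2^{k+1})$. There $\mathcal{Q}$ is explicit:
\[
\mathcal{Q}=\mathbf{t}\oplus d \ \text{ on } [0,2^{k+1}),\qquad d \text{ alternating across the blocks } [2^{k-2},2^{k-1}),\ [2^{k-1},2^k),\ [2^k,2^{k+1}).
\]
This means $\mathcal{Q}$ on that range is $\mu^{k-2}$ applied to a short word with some blocks complemented, together with a small irregular prefix of length $<2^{k-2}$. I would split by which boundaries among $2^{k-2},2^{k-1},2^k$ the window crosses. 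The windows that cross $2^{k-1}$ and $2^k$ have a middle complemented block of length $2^{k-1}$. I would test their membership in $\mathrm{Fac}(\mathbf{t})$ by desubstitution: a Thue--Morse factor of length $\ge 7$ has a unique $\mu$-parsing. Applying $\mu^{-1}$ repeatedly reduces each window to a word of bounded length, and that word's membership can be checked by hand. I expect roughly $2^{k-2}$ of these starting positions to give non-factors, for example $p$ in a range of length $2^{k-2}$ just below $2^{k-1}$. The other windows should reduce to genuine factors of $\mathbf{t}$.

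The main obstacle is twofold. I must determine exactly which initial windows are non-factors, and I must show that those $2^{k-2}$ words are pairwise distinct. Distinctness might fail if two starting positions produce the same word. I would try to prove it by locating the unique position where the $\mu^{k-2}$-parsing breaks: it is forced by the complementation pattern and determines $p$. The hypothesis $k\ge4$ is needed so that the blocks are long enough for unique parsing and the irregular prefix does not interfere. As a fallback, the count $|J_{2^k}|=2^{k-2}$ can be certified with \texttt{Walnut} via Lemma~\ref{RScomputable}, by expressing ``the factor at $p$ of length $n$ is not a factor of $\mathbf{t}$ and is its first occurrence in $\mathcal{Q}$'' as a first-order predicate in base $2$ or base $4$.
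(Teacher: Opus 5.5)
Your reduction to $|J_{2^k}|=2^{k-2}$ is right. Your far-window argument is also right for boundaries $2^j$ with $j\ge k+1$.

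\textbf{Step 1 fails at $j=k$.} The block $[2^{k-1},2^k)$ has length $2^{k-1}<n$, and $d$ is not constant on $[0,2^k)$. So ``passing to $\mathcal{Q}$ complements one side relative to the other'' is false there. Every window with $1\le p<2^k$ crosses $2^k$, so step 1 as stated would give $J_{2^k}\subseteq\{\mathcal{Q}[0,2^k)\}$, contradicting the theorem. Restricting to $j\ge k+1$ still gives $p<2^k$, so this is repairable.

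\textbf{The core count is never carried out.} This is the genuine gap. You only \emph{expect} about $2^{k-2}$ bad starting positions, and the location you guess is wrong. For $2^{k-2}\le p<2^{k-1}$ the window lies in $\mu^{k-2}(c\bar c c c\bar c)$ with $c=c_{k-2}$ (defined below). Since $c\bar c cc\bar c\in\{01001,10110\}$ is a factor of $\mathbf{t}$, all these windows are good. The non-factor windows are exactly those with $0\le p<2^{k-2}$.

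\textbf{The missing idea.} Since $\mathcal{Q}=\mathbf{t}\oplus d$, one has $\mathcal{Q}[2^j,2^{j+1})=\mu^j(c_j)$ with $c_j=1\oplus(j\bmod 2)$. With $m=2^{k-2}$ and $c=c_{k-2}$, this gives
\[
\mathcal{Q}[0,8m)=P\,\mu^{k-2}(c\,\bar c c\,c\bar c\bar c c),\qquad P=\mathcal{Q}[0,m).
\]
Windows with $2m\le p<4m$ lie in $\mu^{k-1}(\bar c c\bar c)$, and windows with $m\le p<2m$ lie in $\mu^{k-2}(c\bar c cc\bar c)$; both kinds are good.

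Now take $p<m$.
\begin{itemize}
\item Every factor of $\mathbf{t}$ of length at least $4$ has a unique $\mu$-cut parity. So any occurrence in $\mathbf{t}$ of the piece $\mathcal{Q}[m,4m)=\mu^{k-2}(c\bar c c)$ starts at a multiple of $m$.
\item Hence a good window would sit inside $\mu^{k-2}(a\,c\bar c c\,b)$, with $\mathcal{Q}[p,m)$ a suffix of $\mu^{k-2}(a)$, and with $b=c$ when $p\ge 1$.
\item $P$ ends in $\mu^{k-4}(c)\mu^{k-3}(\bar c)$. This is exactly where $k\ge 4$ is used; at $k=3$ one has $P=\mu(c_1)$ and only one bad window.
\item $\mu^{k-2}(c)$ ends in $\mu^{k-4}(\bar c)\mu^{k-3}(\bar c)$, and $\mu^{k-2}(\bar c)$ ends in $\mu^{k-3}(c)$.
\item So $a$ exists only if $p\ge 2^{k-3}$, and then $a=c$. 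That would make $cc\bar c cc\in\{00100,11011\}$ a factor of $\mathbf{t}$, which it is not.
\end{itemize}
Distinctness follows from overlap-freeness rather than locating parsing breaks. Equal windows at $p<p'<m$ would give $\mu^{k-2}(c\bar c c)$, of length $3m$, a period $p'-p<m$. That produces an overlap inside a factor of $\mathbf{t}$.

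\textbf{Comparison with the paper.} Without this analysis your proposal reduces to its Walnut fallback. That fallback is effectively the paper's route. The paper does not describe $J_{2^k}$ explicitly; it uses a Walnut-certified recurrence $p_{\mathcal{Q}}(2^{k+2})=4p_{\mathcal{Q}}(2^k)+6$ together with computed initial values.
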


\begin{proof}
 The value $p_{\mathbf{t}}(2^k) = 3 \cdot 2^k - 2$ is known~\cite{Brlek1989,deLucaVarricchio1989}. By Proposition~\ref{prop:Q-exact}, 
 we have that $p_{\mathcal{Q}}(2^k) = p_{\mathbf{t}}(2^k) + |J_{2^k}|$. Also, Lemma \ref{lemcountRS} gives us that 
\begin{equation}\label{displayRSQ} 
 \#\mathrm{RS}_{\mathcal{Q}}(n) = p_{\mathcal{Q}}(n+1) - p_{\mathcal{Q}}(n). 
\end{equation}
 The right-special count in \eqref{displayRSQ} takes values in $\{2,3,4\}$, and this is certified for all $n\ge 2$ by Walnut. The desired result 
 follows from the recurrence $$ p_{\mathcal{Q}}(2^{k+2}) = 4\, p_{\mathcal{Q}}(2^k) + 6 \qquad (k \geq 4), $$ with 
 $p_{\mathcal{Q}}(16) = 50$, which has the unique solution
 $p_{\mathcal{Q}}(2^k) = 13\cdot 2^{k-2} - 2$, also verified computationally for
 $4\le k\le 7$.
\end{proof}

\begin{remark}
 The set $J_{2^k}$ has size $2^{k-2}$, so factors in $J_{2^k}$ account for one quarter of the length-$2^k$ factors of 
 $\mathcal{Q}$ beyond those of $\mathbf{t}$.
\end{remark}

 Since $\#\mathrm{RS}_{\mathcal{Q}}(n)\in\{2,3,4\}$ for all $n\ge 1$, the values of
 $p_{\mathcal{Q}}$ are determined by the support of each slope.

 Lemma \ref{lemcountRS} is implicit in the following Proposition, according to \eqref{anotherRSQ}. 

\begin{proposition}\label{relabelprop} 
 For $n \geq 16$, set $k = \lfloor \log_2 n \rfloor$ and write $n = 2^k + i$ with $0 \leq i < 2^k$. Then $$ p_{\mathcal{Q}}(n) =
 \begin{cases}
 13 \cdot 2^{k-2} - 2, & \text{if } i = 0; \\
 4n - 3 \cdot 2^{k-2} - 4, & \text{if } 1 \leq i \leq 2^{k-1}; \\ 
 3n + 3 \cdot 2^{k-2} - 3, & \text{if } 2^{k-1} < i \leq 3 \cdot 2^{k-2}; \\ 
 2n + 5 \cdot 2^{k-1} - 2, & \text{if } 3 \cdot 2^{k-2} < i < 2^k.
 \end{cases}
$$ Moreover, we have that 
\begin{equation}\label{anotherRSQ} 
 \#\mathrm{RS}_{\mathcal{Q}}(n) = p_{\mathcal{Q}}(n+1) - p_{\mathcal{Q}}(n). 
\end{equation}
 The left-hand side of \eqref{anotherRSQ} equals $4$ for $n \in (2^k,\, 3 \cdot 2^{k-1}]$, and $3$ for $n \in (3 \cdot 2^{k-1},\, 7 \cdot 
 2^{k-2}]$, and $2$ for $n \in (7 \cdot 2^{k-2},\, 2^{k+1})$. 
\end{proposition}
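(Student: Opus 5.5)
The plan is to get the piecewise formula by telescoping from the anchor values $p_{\mathcal{Q}}(2^k)=13\cdot 2^{k-2}-2$ of the preceding theorem. The increments are given by Lemma~\ref{lemcountRS}, which is exactly the identity \eqref{anotherRSQ}. The content of the proposition is therefore the exact value of $\#\mathrm{RS}_{\mathcal{Q}}(n)$ on each dyadic block, and everything else is bookkeeping.

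\textbf{Step 1: right-special counts.} I would show that for $k\ge 4$,
$$\#\mathrm{RS}_{\mathcal{Q}}(n)=\begin{cases}4,& 2^k< n\le 3\cdot 2^{k-1},\\ 3,& 3\cdot 2^{k-1}<n\le 7\cdot 2^{k-2},\\ 2,& 7\cdot 2^{k-2}<n\le 2^{k+1}.\end{cases}$$
Including $n=2^{k+1}$ in the last range is consistent with the next block starting strictly after $2^{k+1}$.
\begin{itemize}
\item Since $\#\mathrm{RS}_{\mathcal{Q}}$ is bounded by $4$, Lemma~\ref{RScomputable} makes it $4$-automatic, hence $2$-automatic. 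The cleanest certification is therefore a Walnut computation from the DFAO of Figure~\ref{fig:Q-dfao}: define the right-special predicate and check the three first-order statements above.
\item For a human-readable argument, I would compare with the classical count for $\mathbf{t}$ (Brlek; de Luca--Varricchio). That count is $4$ on $(2^k,3\cdot 2^{k-1}]$ and $2$ on $(3\cdot 2^{k-1},2^{k+1}]$.
\item By Proposition~\ref{prop:Q-exact}, $\mathrm{Fac}(\mathcal{Q})=\mathrm{Fac}(\mathbf{t})\cup J$. The extra right-special factor on $(3\cdot 2^{k-1},7\cdot 2^{k-2}]$ should be a Thue--Morse factor $u$ that is not right-special in $\mathbf{t}$ but acquires its missing extension $u\alpha\in J$.
\item That extension comes from a window straddling a boundary $2^{m}$ between a $\mathbf{t}$-block and a $\bar{\mathbf{t}}$-block, with $m=k$ or $k+1$.
\item Proposition~\ref{prop:Q-rep} describes these windows explicitly. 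Near $2^m$ the word reads $\mathbf{t}[2^m-\ell,2^m)$ followed by the complement of $\mathbf{t}[2^m,2^m+r)$.
\item Using $\mathbf{t}[2^m,2^{m+1})=\overline{\mathbf{t}[0,2^m)}$, this becomes a suffix of $\mathbf{t}[0,2^m)$ followed by a prefix of $\mathbf{t}$. One then checks for which lengths such a straddling word, minus its last letter, is a $\mathbf{t}$-factor whose extension disagrees with $\mathbf{t}$.
\end{itemize}

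\textbf{Step 2: summation.} Let $n=2^k+i$. Summing the counts from $2^k$ gives $p_{\mathcal{Q}}(2^k+1)=p_{\mathcal{Q}}(2^k)+2$, using the count $2$ at $n=2^k$ from the previous block. The first branch then adds slope $4$, giving $4n-3\cdot2^{k-2}-4$; evaluated at $n=2^k+1$ this equals $13\cdot 2^{k-2}$, as required. I would check agreement at the breakpoints $i=2^{k-1}$ and $i=3\cdot 2^{k-2}$, where the branches meet with slopes $3$ and then $2$. Finally I would confirm that the last branch at $n=2^{k+1}$ returns $13\cdot 2^{k-1}-2$, closing the induction on $k$. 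The base $k=4$, with $p_{\mathcal{Q}}(16)=50$, is a finite check.

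\textbf{Main obstacle.} The hard part is Step 1, specifically the exact lengths in $(3\cdot 2^{k-1},7\cdot 2^{k-2}]$ where a third right-special factor exists. One must also rule out further boundary-induced right-special factors elsewhere, for instance ones created at the boundary $2^{k+1}$ for short lengths. I would first try to settle this by the Walnut certification. Failing a clean hand proof, I would fall back on the structural description of the straddling windows above, matched against the known right-special factors of $\mathbf{t}$ (prefixes of $\mu^m(0)$ and $\mu^m(1)$ and their complements).
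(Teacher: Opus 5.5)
Your primary route is the paper's own: the paper proves this proposition solely by a Walnut certification, and your Step~1 Walnut check of the three right-special ranges, followed by the telescoping in Step~2, is exactly what that certification amounts to. The breakpoint arithmetic is correct. One small addition: $\#\mathrm{RS}_{\mathcal{Q}}(16)=2$ (equivalently $p_{\mathcal{Q}}(17)=52$) is not covered by your ranges for $k\ge 4$, so it belongs in the base check.

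Your hand-proof fallback, however, would find nothing as described.

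\emph{Single- and double-boundary windows never leave $\mathrm{Fac}(\mathbf{t})$.} Up to complementation, a window straddling only the boundary $2^m$ is a suffix of $\mathbf{t}[0,2^m)$ followed by a prefix of $\mathbf{t}$ of length $r\le 2^m$. That is a factor of $\mu^m(00)$, so it always lies in $\mathrm{Fac}(\mathbf{t})$. More generally, $\mathcal{Q}[2^{m-1},2^{m+2})$ is $\mu^{m-1}$ applied to $1011001$ or to its complement. Hence no window straddling at most two consecutive boundaries lies in $J$, and your proposed check at $2^k$ or $2^{k+1}$ produces no element of $J$ at all.

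\emph{Where $J_n$ actually comes from.} The factors in $J_n$ straddle at least three consecutive boundaries and all start near the beginning of $\mathcal{Q}$. Write $w=\mathcal{Q}[1,\infty)$; then $w=1\,\bar\mu(w)$, where $\bar\mu(0)=10$ and $\bar\mu(1)=01$. The shortest factor outside $\mathrm{Fac}(\mathbf{t})$ is $\mathcal{Q}[1,9)=10110010$. Desubstitution shows that $w[p,p+n)\notin\mathrm{Fac}(\mathbf{t})$ exactly in the following cases:
\begin{itemize}
\item $p=0$ and $n\ge 8$;
\item $p=1+2j$ and $w[j,j+\lceil n/2\rceil)\notin\mathrm{Fac}(\mathbf{t})$;
\item $p=2+2j$ and $w[j,j+\lfloor n/2\rfloor+1)\notin\mathrm{Fac}(\mathbf{t})$.
\end{itemize}

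\emph{An example.} Take $k=4$ and length $25$. The word $\mathcal{Q}[7,32)$ desubstitutes twice to $1011001\in\mathrm{Fac}(\mathbf{t})$. Its extension $\mathcal{Q}[7,33)$ desubstitutes twice to $10110010\notin\mathrm{Fac}(\mathbf{t})$. So $\mathcal{Q}[7,32)$ is right-special in $\mathcal{Q}$ but not in $\mathbf{t}$, and the responsible window straddles $8$, $16$ and $32$ simultaneously.

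\emph{What a hand proof needs.} It should run this recursion and check that distinct starting positions give distinct words. Together these yield $|J_{n+1}|-|J_n|=1$ exactly for $n\in(3\cdot 2^{k-1},7\cdot 2^{k-2}]$. It must also rule out factors in $J_n$ that are themselves right-special, a case your sketch does not consider.
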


\begin{proof}
 This is certified by Walnut. 
\end{proof}

 By Proposition~\ref{relabelprop}, $n+1\le p_{\mathcal{Q}}(n)=O(n)$ for all $n\ge 1$.

\subsection{\texorpdfstring{Factor complexity of $\mathcal{M}_{1}$ and $\mathcal{M}_{2}$}{Factor complexity of M1 and M2}}

\begin{proposition}
We have $p_{\mathcal{M}_{1}}(n) = \Theta(n)$ and $p_{\mathcal{M}_{2}}(n) = \Theta(n)$.
\end{proposition}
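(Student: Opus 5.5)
The plan is to prove the upper bound $O(n)$ from a general structural fact and the lower bound from aperiodicity.

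\textbf{Upper bound.} By Theorem~\ref{thm:M1-dfao} and Theorem~\ref{thm:M2-dfao}, each $\mathcal{M}_i$ is $4$-automatic. By Cobham's theorem it is the image, under the coding $\tau$, of the fixed point of the $4$-uniform morphism in Corollary~\ref{cor:M1-morphism} (resp.\ Corollary~\ref{cor:M2-morphism}). For an automatic sequence the factor complexity is $O(n)$, as in Cobham's classical result; see Allouche--Shallit~\cite[Ch.~10]{AlloucheShallit2003}.

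To keep the argument self-contained, I would sketch the direct version. Let $\sigma$ be the $4$-uniform morphism and $\mathbf{x}$ its fixed point. Any factor $w$ of $\mathbf{x}$ with $4^{k-1}\le |w|<4^k$ lies inside $\sigma^k(uv)$ for some length-$2$ factor $uv$ of $\mathbf{x}$. It is then determined by the pair $uv$ (at most $16$ choices) and a starting offset in $[0,4^k)$. This gives $p_{\mathbf{x}}(n)\le 16\cdot 4^k\le 256\,n$. Applying the coding can only decrease complexity, so $p_{\mathcal{M}_i}(n)\le 256\,n$.

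\textbf{Lower bound.} By the Morse--Hedlund theorem, if $p(n)\le n$ for some $n$, the sequence is ultimately periodic. Otherwise $p(n)\ge n+1$ for all $n$, which is the required $\Omega(n)$. So it suffices to show that neither $\mathcal{M}_1$ nor $\mathcal{M}_2$ is ultimately periodic. This is the main step.

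For $\mathcal{M}_2$, I would use Theorem~\ref{thm:M2-closed}. Suppose $\mathcal{M}_2$ has ultimate period $P$, and choose $K\equiv 0\pmod 3$ with $4^K>P$. Take $n$ large whose digits at positions $\ge K$ are fixed and whose low part varies. The formula writes $\mathcal{M}_2(n)$ as an XOR of contributions from the digits $d_k$. Compare $\mathcal{M}_2(n)$ with $\mathcal{M}_2(n+4^K)$, where $n$ has digit $d_K=0$ and the sum $n+4^K$ produces no carry. The contribution of $d_K$ changes from $0$ to $i_K\oplus j_K=1$, so $\mathcal{M}_2(n+4^K)=1\oplus\mathcal{M}_2(n)$ for all such $n$. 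Taking this with $K$ and $K+3$ and multiples of $P$ gives a contradiction. More cleanly, choose $n$ and $m=n+P\cdot 4^{K'}$ so that the digit changes produce an odd total parity change. Alternatively, $\mathcal{M}_2(4^{3j}n)=\mathcal{M}_2(n)$ and $\mathcal{M}_2(4^{3j}+r)=1\oplus\mathcal{M}_2(r)$ for $r<4^{3j}$. These identities mean large blocks $[4^{3j},2\cdot4^{3j})$ are complements of initial blocks, and that contradicts a period $P$ once $4^{3j}$ is a multiple-scale larger than $P$. I would write this with the indices $r$ and $r+P$ both in the two ranges.

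For $\mathcal{M}_1$, I would use the same block argument with the DFAO of Theorem~\ref{thm:M1-dfao}. From state $\mathsf{0}$, reading digit $1$ leads to state $\mathsf{1}$. I expect a word $u$ with $\delta(\mathsf{0},u)=\mathsf{0}$, for instance $u=13$, since $\delta(\mathsf{0},1)=\mathsf{1}$ and $\delta(\mathsf{1},3)=\mathsf{0}$. I also need a word $u'$ of equal length leading to a state whose subsequent outputs are complemented. Then $\mathcal{M}_1$ restricted to $[u\cdot 4^{L},(u+1)4^{L})$ equals the prefix of length $4^{L}$, while the block at $u'$ differs from it in a positive proportion of positions. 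Any period $P<4^{L}$ would force these two blocks to agree with shifted copies of each other, giving a contradiction. Finding a suitable pair $(u,u')$ and checking the proportion claim from the affine rules of Lemma~\ref{lem:M1-closure} is the part I expect to need the most care. A fallback is a Walnut certificate of non-periodicity, which the paper already accepts elsewhere.
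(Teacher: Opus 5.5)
Your upper bound is correct and takes a different route from the paper. The paper reads off primitivity from the incidence matrices and then cites Pansiot. Your block-counting argument is self-contained: a factor of length $n$ with $4^{k-1}\le n<4^k$ lies in $\sigma^k(uv)$ for a length-$2$ factor $uv$, so $p(n)\le 16\cdot 4^k=O(n)$. It uses no primitivity, so it works for any coding of a fixed point of a $4$-uniform morphism. Your lower-bound skeleton (Morse--Hedlund plus non-ultimate-periodicity) is the same as the paper's. The paper only asserts non-periodicity (``checked from the DFAOs, or certified by Walnut''), which matches your fallback.

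Your direct non-periodicity arguments, however, do not work as written.
\begin{itemize}
\item That $[4^{3j},2\cdot 4^{3j})$ is the complement of $[0,4^{3j})$ is compatible with a period. The complement of a $P$-periodic word is again $P$-periodic (compare $0101\cdots$ and $1010\cdots$). Also, $4^{3j}$ is generally not a multiple of $P$, so periodicity says nothing about the pair $(r,4^{3j}+r)$.
\item The variant $m=n+P\cdot 4^{K'}$, with $n$ and $P\cdot 4^{K'}$ having disjoint digit supports, fails for some $P$. For $P=21=(111)_4$ one has $\mathcal{M}_2(21\cdot 4^{K'})=0$ for every $K'$, so the value never changes.
\item For $\mathcal{M}_1$ the same objection applies: a complemented block may coincide with a shift of the original, so ``differs in a positive proportion of positions'' gives no contradiction. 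Moreover, the word $u'$ is never actually produced.
\end{itemize}

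What is missing is a comparison at block boundaries that breaks the complement symmetry. For $a=\mathcal{M}_1$ take any $K$; this works because $\delta(\mathsf{0},1)=\delta(\mathsf{0},2)=\mathsf{1}$ in Theorem~\ref{thm:M1-dfao}. For $a=\mathcal{M}_2$ take $3\mid K$; this works because digits $1$ and $2$ in position $K$ both contribute $i_K\oplus j_K=1$ in Theorem~\ref{thm:M2-closed}. In both cases:
\begin{itemize}
\item $a(4^K+r)=a(2\cdot 4^K+r)$ for $0\le r<4^K$;
\item $a(2\cdot 4^K)=a(2)=1$ and $a(3\cdot 4^K)=a(3)=0$.
\end{itemize}
Suppose $a(n+P)=a(n)$ for all $n\ge N$. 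Take $4^K\ge N+P$ and $r=4^K-P$. Then
\[
1=a(2\cdot 4^K)=a(4^K+r)=a(2\cdot 4^K+r)=a(3\cdot 4^K)=0,
\]
where the two outer equalities use the period. With this replacing your sketches, the proof is complete and more explicit than the paper's.
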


\begin{proof}
 The incidence matrices of the morphisms in Corollaries~\ref{cor:M1-morphism} and~\ref{cor:M2-morphism} are
\begin{equation}\label{displayincidence}
 M_{\mathcal{M}_{1}} = \begin{pmatrix}
 1 & 2 & 1 & 0 \\
 1 & 1 & 1 & 1 \\
 1 & 1 & 1 & 1 \\
 1 & 0 & 1 & 2
 \end{pmatrix}, \qquad
 M_{\mathcal{M}_{2}} = \begin{pmatrix}
 1 & 1 & 1 & 1 \\
 1 & 1 & 1 & 1 \\
 1 & 1 & 1 & 1 \\
 1 & 1 & 1 & 1
 \end{pmatrix}. 
\end{equation} 
 The incidence matrices in \eqref{displayincidence} give us that both of the associated morphisms are primitive. A result of Pansiot 
 \cite{Pansiot1984} gives us that the fixed point of a primitive $k$-uniform morphism has factor complexity 
 $O(n)$, and the same upper bound holds after applying a coding. Neither $\mathcal{M}_{1}$ nor $\mathcal{M}_{2}$ is ultimately 
 periodic, as can be checked from the DFAOs of Theorems~\ref{thm:M1-dfao} and~\ref{thm:M2-dfao}, or
 certified by Walnut. By the Morse--Hedlund theorem \cite[Theorem~10.2.6]{AlloucheShallit2003}, the factor complexity of 
 a non-ultimately-periodic
 word is at least $n+1$ for every $n\ge 0$. Hence both complexities are $\Theta(n)$.
\end{proof}

 An explicit evaluation of the factor complexity of $\mathcal{M}_{2}$ is provided below. 

\begin{example} 
 We obtain the initial values $$
 p_{\mathcal{M}_{1}}(1\ldots 15) = (2, 4, 6, 9, 12, 17, 22, 28, 34, 40, 46, 52, 58, 64, 70),
$$ and $$
 p_{\mathcal{M}_{2}}(1\ldots 15) = (2, 4, 6, 10, 12, 14, 16, 18, 20, 24, 28, 32, 36, 40, 44). $$
\end{example}

\begin{proposition}\label{propanotherrelabel} 
For all $k \geq 3$, we have that 
$$
 p_{\mathcal{M}_{2}}(2^k) =
 \begin{cases}
 \dfrac{5}{2}\cdot 2^k - 2, & \text{if } 3 \mid k; \\ 
 3 \cdot 2^k - 2, & \text{otherwise.} 
 \end{cases}
$$ Moreover, the recurrence $$
 p_{\mathcal{M}_{2}}(2^{k+3}) = 8\, p_{\mathcal{M}_{2}}(2^k) + 14 \qquad (k \geq 3) $$
 holds true. 
\end{proposition}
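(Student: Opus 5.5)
We derive the recurrence first and then solve it. Solving is immediate once the base cases are fixed. Suppose $p_{\mathcal{M}_{2}}(2^{k+3}) = 8\,p_{\mathcal{M}_{2}}(2^k)+14$ holds for $k\ge 3$. The map $x\mapsto 8x+14$ sends $c\cdot 2^k-2$ to $c\cdot 2^{k+3}-2$, so the closed form propagates along each residue class of $k$ mod $3$. It therefore suffices to check the three base values
\[
p_{\mathcal{M}_{2}}(8)=18,\qquad p_{\mathcal{M}_{2}}(16)=46,\qquad p_{\mathcal{M}_{2}}(32)=94.
\]
The first agrees with the Example. The other two we compute directly from a sufficiently long prefix, using the uniform recurrence of $\mathcal{M}_{2}$ (every factor of length $n$ occurs in a prefix of length bounded linearly in $n$).

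For the recurrence itself, the natural scale is $8=2^3$, because $A$ has order $3$. By Theorem~\ref{thm:M2-tq}, $\mathcal{M}_{2}(n)=\mathbf{t}(q(n))$, where $q$ zeroes the bits in positions $\equiv 2 \pmod 3$. Hence $q(8n+r)=8q(n)+q(r)$ for $0\le r<8$, and $q(r)\in\{0,1,2,3\}$ depends only on $r \bmod 4$. This gives the $8$-block identity
\[
\mathcal{M}_{2}(8n+r)=\mathcal{M}_{2}(n)\oplus \mathbf{t}(r \bmod 4).
\]
Thus $\mathcal{M}_{2}$ is the image of itself under the $8$-uniform morphism $\varphi$ with $\varphi(0)=w$ and $\varphi(1)=\bar w$, where $w=01100110$. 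Note that $w$ is the length-$4$ Thue--Morse block $0110$ repeated twice.

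With this self-similarity we run the standard ``desubstitution'' argument used for Thue--Morse (Brlek, de~Luca--Varricchio). The plan is as follows.
\begin{enumerate}
\item We prove a synchronization lemma. Every factor of length at least some small $L_0$ (we expect $L_0=16$ or so) has a unique cutting point modulo $8$. We must be careful here: $w$ has period $4$, so factors can only be synchronized mod $4$ from inside a block. Synchronization mod $8$ must come from the block boundaries, where $\bar w$ follows $w$ or $w$ follows $w$. We therefore expect to need factors long enough to contain a ``seam'' between blocks.
\item Given synchronization, each factor $u$ of length $8m$ with $m$ large corresponds to a pair (ancestor factor, offset in $\{0,\dots,7\}$). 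The ancestor has length $m$ or $m+1$ according to whether the offset is $0$.
\item This yields $p(8m)=\#\{\text{factors of length } m+1\}\cdot(\text{number of admissible offsets})+\dots$, corrected for coincidences caused by the period-$4$ structure of $w$. Equivalently, we track right-special factors as in Lemma~\ref{lemcountRS}: right-special factors of length $n$ lift to right-special factors of length $8n+c$, with only a bounded number of exceptional short ones.
\end{enumerate}
Summing first differences over $[2^k,2^{k+3})$ then gives $p(2^{k+3})-8p(2^k)=$ a constant. The constant is pinned down as $14$ by the base values above.

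The main obstacle is the bookkeeping of right-special factors. We must show that $\#\mathrm{RS}_{\mathcal{M}_{2}}(8n+j)$ equals $\#\mathrm{RS}_{\mathcal{M}_{2}}(n)$ or $\#\mathrm{RS}_{\mathcal{M}_{2}}(n+1)$ in a pattern that holds uniformly for $n\ge 2^k$, $k\ge3$. The period-$4$ ambiguity of $w$ makes the lifting map not obviously injective. If the hand analysis becomes unwieldy, we fall back on the effective route of Lemma~\ref{RScomputable}. The count $\#\mathrm{RS}_{\mathcal{M}_{2}}(n)$ is $4$-regular and bounded, since $p=\Theta(n)$ and the count is bounded for uniform morphic words. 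It is therefore $4$-automatic and computable from the DFAO of Theorem~\ref{thm:M2-dfao}, and we certify it in Walnut (using the Nicol--Frohme determinization). We then sum the certified slope pattern over $[2^k,2^{k+3})$ to obtain the recurrence and the closed form.
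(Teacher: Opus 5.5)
The paper's proof of this proposition is the single sentence that it is certified by Walnut. Your fallback through Lemma~\ref{RScomputable} is therefore exactly the paper's argument. Your primary route is genuinely different. It uses the $8$-uniform self-similarity $\mathcal{M}_{2}(8n+r)=\mathcal{M}_{2}(n)\oplus\mathbf{t}(r\bmod 4)$, which you derive correctly from Theorem~\ref{thm:M2-tq}, and it would give a human-checkable proof that explains the recurrence rather than merely certifying it. As written, though, it is only a plan (``we expect $L_0=16$ or so'', ``we must show''). The obstacle you anticipate does not materialize, and the route closes in a few lines.

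Write $p=p_{\mathcal{M}_{2}}$ and $\Delta_j=\mathcal{M}_{2}(j)\oplus\mathcal{M}_{2}(j+1)$. Your block identity gives
\[
\Delta_{8i+s}=1\ (s\in\{0,2,4,6\}),\qquad \Delta_{8i+s}=0\ (s\in\{1,3,5\}),\qquad \Delta_{8i+7}=\mathcal{M}_{2}(i)\oplus\mathcal{M}_{2}(i+1).
\]
Suppose a factor $u$ with $|u|\ge 4$ occurs at positions $\ell\not\equiv\ell'\pmod 8$.
\begin{itemize}
\item Since $\Delta$ is $1$ at every even position but, among odd positions, only possibly in the class $7 \bmod 8$, we get $\ell\equiv\ell'\pmod 2$.
\item Then every position $\equiv 7\pmod 8$ in either difference window is matched with a position in $\{1,3,5\}\bmod 8$ in the other, so it carries $0$.
\item Two such positions $8i+7$ and $8i+15$ both carrying $0$ would give three equal consecutive letters in $\mathcal{M}_{2}=\varphi(\mathcal{M}_{2})$. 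That is impossible, since $\mathcal{M}_{2}$ is a concatenation of $01100110$ and $10011001$.
\item So the window has at most $15$ elements, and $|u|\le 16$.
\end{itemize}
Hence every factor of length $n\ge 17$ has a unique offset mod $8$. Because $\varphi(0)$ and $\varphi(1)$ differ in every position, it also has a unique ancestor. The map $u\mapsto(\text{ancestor},\text{offset})$ is therefore a bijection, with no period-$4$ coincidences to correct:
\[
p(n)=\sum_{r=0}^{7}p\left(\left\lceil \frac{n+r}{8}\right\rceil\right)\qquad(n\ge 17).
\]
This gives $p(8m)=p(m)+7p(m+1)$ for $m\ge 3$ and $p(8m+1)=8p(m+1)$ for $m\ge 2$. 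Consequently:
\begin{itemize}
\item $p(2^{k+3})=8p(2^k)+7\,\#\mathrm{RS}_{\mathcal{M}_{2}}(2^k)$ for $k\ge 2$;
\item $\#\mathrm{RS}_{\mathcal{M}_{2}}(8m)=\#\mathrm{RS}_{\mathcal{M}_{2}}(m)$ for $m\ge 3$.
\end{itemize}
The second identity reduces $\#\mathrm{RS}_{\mathcal{M}_{2}}(2^k)=2$ to $k\in\{2,3,4\}$. These are the checks $p(5)-p(4)=12-10$, $p(9)-p(8)=20-18$, and $p(17)-p(16)=8p(3)-46$, each equal to $2$, with $p(16)=46$ counted directly.

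This yields the constant $14=7\cdot 2$ outright. Your ``summing first differences'' step would not do this. It only gives $D(k+3)=D(k)+7\bigl(\#\mathrm{RS}_{\mathcal{M}_{2}}(2^{k+3})-\#\mathrm{RS}_{\mathcal{M}_{2}}(2^k)\bigr)$ for $D(k)=p(2^{k+3})-8p(2^k)$. The values $p(8),p(16),p(32)$ alone do not determine $D(3)=p(64)-8p(8)$, so they cannot pin the constant as you claim.

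With the recurrence in hand, your propagation from $p(8)=18$, $p(16)=46$, and $p(32)=p(4)+7p(5)=94$ finishes the proof without Walnut. The same identity determines all of $p_{\mathcal{M}_{2}}$ from its values up to $16$. It therefore also offers a route to the paper's subsequent piecewise evaluation, which the paper likewise only certifies by machine.
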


\begin{proof}
 This is certified by Walnut. 
\end{proof}

 Since $\#\mathrm{RS}_{\mathcal{M}_{2}}(n)\in\{2,4\}$, the values of $p_{\mathcal{M}_{2}}$ are determined by the positions at which
 $\#\mathrm{RS}_{\mathcal{M}_{2}}(n) = 4$. The following proposition extends Proposition~\ref{propanotherrelabel} to a 
 complete evaluation.

\begin{proposition}
We have $p_{\mathcal{M}_{2}}(1)=2$. For $n \geq 2$, set $k = \lfloor \log_2 n \rfloor$ and $r = k \bmod 3$.

 \ 

\noindent\emph{Case $r = 2$:} On the interval $[2^k, 2^{k+1}]$, we have that $$ p_{\mathcal{M}_{2}}(n) = 2n + (2^k - 
 2). $$

\noindent\emph{Case $r = 0$:} On the interval $ [2^k, 2^{k+1}]$, we have that 
$$ p_{\mathcal{M}_{2}}(n) = 
 \begin{cases}
 2n + (2^{k-1} - 2), & \text{if } 2^k \leq n \leq 2^k + 1; \\ 
 4n - (3 \cdot 2^{k-1} + 4), & \text{if } 2^k + 1 \leq n \leq 7 \cdot 2^{k-2} + 1; \\ 
 2n + (2^{k+1} - 2), & \text{if } 7 \cdot 2^{k-2} + 1 \leq n \leq 2^{k+1}.
 \end{cases} $$

\noindent\emph{Case $r = 1$:} On the interval $ [2^k, 2^{k+1}]$, 
 we have that $$ p_{\mathcal{M}_{2}}(n) = 
 \begin{cases}
 2n + (2^k - 2), & \text{if } 2^k \leq n \leq 2^k + 1; \\ 
 4n - (2^k + 4), & \text{if } 2^k + 1 \leq n \leq 3 \cdot 2^{k-1} + 1; \\ 
 2n + (2^{k+1} - 2), & \text{if } 3 \cdot 2^{k-1} + 1 \leq n \leq 2^{k+1}. 
 \end{cases}
$$
\end{proposition}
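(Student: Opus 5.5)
The plan is to reduce the statement, via Lemma~\ref{lemcountRS}, to a description of $\#\mathrm{RS}_{\mathcal{M}_{2}}(n)$. Since $p_{\mathcal{M}_{2}}(n+1)-p_{\mathcal{M}_{2}}(n)=\#\mathrm{RS}_{\mathcal{M}_{2}}(n)\in\{2,4\}$, it suffices to prove three things. First, the values at the left endpoints $n=2^k$, which Proposition~\ref{propanotherrelabel} supplies. Second, that on $[2^k,2^{k+1})$ we have $\#\mathrm{RS}(n)=4$ exactly on an interval $[2^k+1,\,N_k)$ and $\#\mathrm{RS}(n)=2$ elsewhere, where
\[ N_k=7\cdot 2^{k-2}+1 \ (r=0),\qquad N_k=3\cdot 2^{k-1}+1 \ (r=1), \]
and where for $r=2$ the value is $2$ throughout. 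Third, the piecewise linear formulas, which then follow by summing the slopes.

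As a consistency check, consider the case $r=0$. Starting from $p(2^k)=\tfrac52 2^k-2=2\cdot 2^k+2^{k-1}-2$, slope $2$ gives $p(2^k+1)=2(2^k+1)+2^{k-1}-2$. This agrees with $4n-(3\cdot2^{k-1}+4)$ at $n=2^k+1$. Slope $4$ up to $7\cdot2^{k-2}+1$ and then slope $2$ lands at $p(2^{k+1})=3\cdot 2^{k+1}-2$, matching Proposition~\ref{propanotherrelabel} for $k+1$ with $3\nmid k+1$. Analogous checks for $r=1$ and $r=2$ confirm that the endpoint data and the slope pattern are mutually consistent. They also show that the three formulas agree at the shared breakpoints, so the overlapping closed intervals cause no ambiguity.

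For the right-special structure itself, the route is Lemma~\ref{RScomputable}. The right-special predicate is first-order definable from the DFAO of Theorem~\ref{thm:M2-dfao}, and $\#\mathrm{RS}_{\mathcal{M}_{2}}$ is bounded, so it is $4$-automatic (equivalently $2$-automatic). A Walnut computation would then certify the predicate ``$\#\mathrm{RS}(n)=4$ iff $n\in[2^k+1,N_k)$ with $k=\lfloor\log_2 n\rfloor$ and the stated residue $k \bmod 3$.'' This predicate is expressible because powers of $2$ and the residue of $\lfloor\log_2 n\rfloor$ mod $3$ are $2$-automatic; the latter requires working in base $2$, or in base $8$ via the mod-$3$ period of $A$.

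If one wants a Walnut-free argument, I would instead describe right-special factors through the morphism in the $a,b,c,d$ labelling of the remark following Corollary~\ref{cor:M2-morphism}. A long right-special factor desubstitutes, by recognizability of the primitive $4$-uniform morphism, to a shorter right-special factor in the letter alphabet. This gives $\#\mathrm{RS}(4n+j)$ in terms of $\#\mathrm{RS}$ near $n$. The period-$3$ behaviour of $A$ (Theorem~\ref{thm:M2-closed}) accounts for the dependence on $k\bmod 3$.

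The main obstacle is this desubstitution step. One must control the two extra ``bispecial'' branches that create slope $4$ and show that they exist precisely on the stated windows. That requires a careful enumeration of bispecial factors of the coded fixed point. Since the coding is $2$-to-$1$, distinct letter factors can collapse, so the count does not transfer directly from the morphism to $\mathcal{M}_{2}$. I would therefore rely on the automatic certification for this step and treat the rest as bookkeeping.
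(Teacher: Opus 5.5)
Your proposal is correct and takes essentially the paper's route. The paper's proof is just a Walnut certification, preceded by the remark that $\#\mathrm{RS}_{\mathcal{M}_{2}}(n)\in\{2,4\}$, so that $p_{\mathcal{M}_{2}}$ is determined by where the slope is $4$; that is exactly your reduction via Lemma~\ref{lemcountRS} followed by automatic certification of the slope-$4$ windows. Two small fixes: Proposition~\ref{propanotherrelabel} is only stated for $k\ge 3$, so for $k\in\{1,2\}$ take the anchors $p_{\mathcal{M}_{2}}(2)=4$ and $p_{\mathcal{M}_{2}}(4)=10$ from the listed initial values (one anchor plus the slope pattern already suffices), and $\lfloor\log_2 n\rfloor \bmod 3$ is already $4$-automatic, so no change of base is required.
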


\begin{proof}
 This is certified by Walnut. 
 \end{proof}

\section{Conclusion}
 Consider replacing $a(2 n) + a(2 n + 1)=1$ by $a(3n)+a(3n+1)+a(3n+2)=2$, with $a$ taking values in $\{0,1\}$. This gives rise to 
 expressions of the form $a(3n+r-a(n))$, and the analogue of Lemma~\ref{lem:balance-xor} converts each such expression into a 
 fixed-argument expression. A small number of initial configurations produce
 $3$-automatic binary sequences with small DFAOs; we plan to treat this family separately. 

\section{Appendix}\label{Appendix} 
Minimal DFAO sizes for the meta-automatic sequences considered in this paper are 
 listed in Table \ref{tab:classification}. 

\begin{table}[ht]
\centering
\caption{Minimal DFAO size for each specialization. The column ``Nested'' counts the number of branch rules ($F$ and $G$) whose 
 right-hand side contains an expression of the form $a(\,\cdot\pm a(n))$. Boldface marks the three sequences studied in this paper.}
\label{tab:classification} 
\normalfont
\begin{tabular}{@{}llccc@{}}
$F(n)$ & $G(n)$ & DFAO & Nested & Notes \\ \hline
$a(n)$ & $a(n)$ & 2 & 0 & Thue--Morse \\
$a(n)$ & $a(2n)$ & $\mathbf{3}$ & $\mathbf{0}$ & $\mathcal{Q}$ (Thue--Morse Quarto) \\
$a(n)$ & $a(2n+1)$ & 2 & 0 & Thue--Morse \\
$a(n)$ & $a(2n+a(n))$ & 4 & 1 & \\
$a(n)$ & $a(2n+1-a(n))$ & $\mathbf{4}$ & $\mathbf{1}$ & $\mathcal{M}_{1}$ \\
$a(2n)$ & $a(n)$ & 4 & 0 & \\
$a(2n)$ & $a(2n)$ & --- & 0 & ultimately periodic \\
$a(2n)$ & $a(2n+1)$ & 2 & 0 & Thue--Morse \\
$a(2n)$ & $a(2n+a(n))$ & 3 & 1 & \\
$a(2n)$ & $a(2n+1-a(n))$ & 3 & 1 & \\
$a(2n+1)$ & $a(n)$ & 5 & 0 & \\
$a(2n+1)$ & $a(2n)$ & 3 & 0 & complement of $\mathcal{Q}$ \\
$a(2n+1)$ & $a(2n+1)$ & 5 & 0 & \\
$a(2n+1)$ & $a(2n+a(n))$ & 5 & 1 & \\
$a(2n+1)$ & $a(2n+1-a(n))$ & 5 & 1 & \\
$a(2n+a(n))$ & $a(n)$ & 4 & 1 & \\
$a(2n+a(n))$ & $a(2n)$ & 4 & 1 & \\
$a(2n+a(n))$ & $a(2n+1)$ & 4 & 1 & \\
$a(2n+a(n))$ & $a(2n+a(n))$ & 4 & 2 & \\
$a(2n+a(n))$ & $a(2n+1-a(n))$ & $\mathbf{4}$ & $\mathbf{2}$ & $\mathcal{M}_{2}$ \\
$a(2n+1-a(n))$ & $a(n)$ & 5 & 1 & \\
$a(2n+1-a(n))$ & $a(2n)$ & 4 & 1 & \\
$a(2n+1-a(n))$ & $a(2n+1)$ & 5 & 1 & \\
$a(2n+1-a(n))$ & $a(2n+a(n))$ & 5 & 2 & \\
$a(2n+1-a(n))$ & $a(2n+1-a(n))$ & 5 & 2 & \\
\end{tabular}
\end{table}

\section{Acknowledgments}
The authors thank the anonymous referee for a careful reading of the manuscript and for helpful suggestions. The authors also thank Gandhar Joshi for independently testing the Walnut files. Claude (Anthropic) was used as a computational assistant, in particular for exploratory computations, for suggesting candidate formulas later checked by independent scripts, and for checking consistency between the manuscript, the Python script, and the Walnut files. The mathematical content, the proofs, and the final wording are the authors' own. The first author thanks Jean-Paul Allouche, Karl Dilcher, and Jeffrey Shallit for useful feedback related to this paper.

\bigskip
\hrule
\bigskip

\noindent 2020 {\it Mathematics Subject Classification}:
 Primary 11B85; Secondary 68Q45, 68R15. 

\noindent \emph{Keywords: } 
 automatic sequence, nested recurrence, Thue--Morse sequence, DFAO, binary sequence. 

\bigskip
\hrule
\bigskip

\noindent (Concerned with sequences
\seqnum{A005185},
\seqnum{A039982},
\seqnum{A244477},
\seqnum{A298952},
\seqnum{A392736}, and
\seqnum{A391614}.)

\bigskip
\hrule
\bigskip

\vspace*{+.1in}
\noindent
 Received March 2 2026.

\end{document}